\newtheorem{theorem}{Theorem}[section]
\newtheorem{lemma}[theorem]{Lemma}
\newtheorem{proposition}[theorem]{Proposition}
\newtheorem{corollary}[theorem]{Corollary}
\theoremstyle{definition}
\newtheorem{example}[theorem]{Example}
\theoremstyle{remark}
\newtheorem{remark}[theorem]{Remark}
\numberwithin{equation}{section}
\begin{document}

\title[Norm-parallelism in the geometry of Hilbert $C^*$-modules]{Norm-parallelism in the geometry of Hilbert $C^*$-modules}

\author[A. Zamani, M.S. Moslehian]{Ali Zamani and Mohammad Sal Moslehian$^*$}

\address{Department of Pure Mathematics, Center of Excellence in Analysis on Algebraic Structures (CEAAS), Ferdowsi University of
Mashhad, P.O. Box 1159, Mashhad 91775, Iran}
\email{moslehian@um.ac.ir; moslehian@member.ams.org; zamani.ali85@yahoo.com}

\subjclass[2010]{46L08, 47A30, 46L05, 47B47.}

\keywords{Hilbert $C^*$-module; state; parallelism; orthogonality; $C^*$-algebra.}

\begin{abstract}
Utilizing the Birkhoff--James orthogonality, we present some characterizations of the norm-parallelism for elements of $\mathbb{B}(\mathscr{H})$ defined on a finite dimensional Hilbert space, elements of a Hilbert $C^*$-module over the $C^*$-algebra of compact operators and elements of an arbitrary $C^*$-algebra. We also consider the characterization of norm parallelism problem for operators on a finite dimensional Hilbert space when the operator norm is replaced by the Schatten $p$-norm. Some applications and generalizations are discussed for certain elements of a Hilbert $C^*$-module.

\end{abstract} \maketitle

\section{Introduction and preliminaries}

Let $\mathbb{B}(\mathscr{H})$ and $\mathbb{K}(\mathscr{H})$ be the $C^*$-algebras of all bounded operators and compact operators on a complex Hilbert space $\mathscr{H}$ endowed with an inner product $[., .]$, respectively. A compact operator $T$ is said to be in the Schatten $p$-class $\mathcal{C}_p\,\,(1\leq p <\infty)$ if ${\rm tr}(|T|^p)< \infty$, where tr denotes the usual trace and the symbol $|T|$ stands for $(T^\ast T)^\frac{1}{2}$. The Schatten $p$-norm of $T$ is defined by $\|T\|_p = ({\rm tr}(|T|^p))^{\frac{1}{p}}$. If $1< p <\infty$, then the norm $\|.\|_p$ is Fr\'{e}chet differentiable at every $T$. In this case
\begin{align*}
\frac{d}{dt}{\Big|}_{t = 0} \|T + tS\|^p_p = p\,\mbox{Re}\,{\rm tr}(|T|^{p - 1}U^*S),
\end{align*}
for every $S$, where $T = U|T|$ is the polar decomposition of $T$. In \cite[Theorem 2.1]{A.E.G}, $\|.\|_1$ is Fr\'{e}chet differentiable at invertible
$T$ if $\mathscr{H}$ is finite dimensional.
The symbol $I$ stands for the identity operator on $\mathscr{H}$.

Recall that, a \emph{positive} element $a$ of a $C^{*}$-algebra $\mathscr A$ is a self-adjoint element whose spectrum
$\sigma(a)$ is contained in $[0, \infty)$. If $a\in\mathscr A$ is positive, we write $a\geq0$. Furthermore, if $a\in\mathscr A$ is positive, then there exists a unique positive $b\in\mathscr A$ such that $a=b^2$. Such an element $b$ is called the \emph{positive square root} of $a$ and is denoted by $a^{\frac{1}{2}}$, in particular $|a| = (a^\ast a)^\frac{1}{2}$. A linear functional $\varphi$ over $\mathscr A$ of norm one is called a \emph{state} if $\varphi(a)\geq0$ for any positive element $a\in\mathscr A$. Also, $r(a)$ stands for the spectral radius of an arbitrary element $a\in\mathscr A$. More details on the theory of $C^*$-algebras can be found e.g. in \cite{mor}.

Hilbert $C^{*}$-modules are essentially objects like Hilbert spaces, except that the inner product, instead of being complex-valued, takes its values in a $C^{*}$-algebra. Although Hilbert $C^{*}$-modules behave like Hilbert spaces in some ways, some fundamental Hilbert space properties
like Pythagoras' equality, the adjointability of operators, and decomposition into orthogonal complements do not hold in general.
An inner product $C^*$-module over a $C^{*}$-algebra $\mathscr A$ is a complex linear space $\mathscr{X}$ which is a right $\mathscr A$-module
with a compatible scalar multiplication (i.e., $\gamma(xa) = (\gamma x)a = x(\gamma a)$ for all $x\in \mathscr{X}, a\in\mathscr A, \gamma\in\mathbb{C}$) and equipped with an $\mathscr A$-valued inner product $\langle\cdot,\cdot\rangle \,: \mathscr{X}\times \mathscr{X}\longrightarrow\mathscr A$ satisfying\\
(i) $\langle x, \gamma y + \mu z\rangle = \gamma\langle x, y\rangle + \mu\langle x, z\rangle$,\\
(ii) $\langle x, ya\rangle=\langle x, y\rangle a$,\\
(iii) $\langle x, y\rangle^*=\langle y, x\rangle$,\\
(iv) $\langle x, x\rangle\geq0$ and $\langle x, x\rangle=0$ if and only if $x=0$,\\
for all $x, y, z\in \mathscr{X}, a\in\mathscr A, \gamma, \mu\in\mathbb{C}$.
It is easy to observe that $\|x\|=\|\langle x, x\rangle\|^{\frac{1}{2}}$ defines a norm on $\mathscr{X}$. If $\mathscr{X}$ with respect to this norm is complete, then it is called a \emph{Hilbert $\mathscr A$-module}, or a \emph{Hilbert $C^*$-module} over $\mathscr A$. Complex Hilbert spaces can be regarded as Hilbert $\mathbb{C}$-modules. Any $C^*$-algebra $\mathscr A$ is indeed a Hilbert $C^*$-module over itself via $\langle a, b\rangle :=a^* b$. For every $x\in \mathscr{X}$ the positive square root of $\langle x, x\rangle$ is denoted by $|x|$. Let us define \emph{elementary operator} $\theta_{x,y}$ by the formula $\theta_{x,y}(z) = x\langle y, z\rangle \,\, (x, y, z\in\mathscr{X})$. It is
easy to see that $\|\theta_{x,y}\| = \|x\|\,\|y\|$, ${\theta}^\ast_{x,y} = \theta_{y,x}$ and $\theta_{x,y}\theta_{z,w} = \theta_{x\langle y, z\rangle,w} \,\, (z, w\in\mathscr{X})$.

In an inner product $\mathcal A$-module $\mathscr{X}$ we have
$${\langle x, y\rangle}^*\langle x, y\rangle \leq \|\langle x, x\rangle\|\langle y, y\rangle \qquad(x, y\in \mathscr{X}),$$
wherefrom the following Cauchy--Schwarz inequality holds (see also \cite{F.F.M.S}):
$$\|\langle x, y\rangle\|^2\leq\|\langle x, x\rangle\|\,\|\langle y, y\rangle\|\qquad(x, y\in \mathscr{X}).$$
Furthermore, if $\varphi$ is a state over $\mathscr A$, then $(x, y)\mapsto\varphi(\langle x, y\rangle)$ gives rise to a usual semi-inner product on $\mathscr{X}$, so we have the following useful Cauchy–-Schwarz inequality:
$$|\varphi(\langle x, y\rangle)|^2\leq\varphi(\langle x, x\rangle)\varphi(\langle y, y\rangle)\qquad(x, y\in \mathscr{X}).$$
We refer the reader to \cite{lan, M.T} for more information on the basic theory of Hilbert $C^*$-modules.

Throughout this paper we assume that $\mathscr A$ is an arbitrary $C^*$-algebra (not necessarily unital), $(\mathscr{X}, \langle ., .\rangle)$ is a Hilbert $\mathscr A$-module and $(\mathscr{H}, [., .])$ denotes a Hilbert space.

If $\mathscr{V}$ is a normed space and $x, y\in \mathscr{V}$, we say that $x$ is \emph{norm-parallel} to $y$, denoted by $x\parallel y$ if
$$\|x+\lambda y\|=\|x\|+\|y\|$$
for some $\lambda\in\mathbb{T}=\{\alpha\in\mathbb{C}: \,\,|\alpha|=1\}$; see \cite{S}. Clearly, two elements of a Hilbert space are norm-parallel if they are linearly dependent.

Notice that the norm-parallelism is symmetric (i.e., $x\parallel y\, \Leftrightarrow \, y\parallel x$ ) and homogeneous (i.e., $x\parallel y\, \Leftrightarrow \, \alpha x\parallel \beta y \quad \mbox{for all}\,\, \alpha, \beta\in\mathbb{R}$) (see \cite{M.Z}), but not transitive (i.e., $x\parallel y$ and $y\parallel z$ $\nRightarrow$ $x\parallel z$) (see Example \ref{ex.19}).

Some characterizations of the norm-parallelism for elements of $C^*$-algebras and Hilbert $C^*$-modules were given in \cite{M.Z}.

Recall that, an element $x\in\mathscr{V}$ is said to be the \emph{Birkhoff--James} orthogonal to another element $y\in\mathscr{V}$, in short $x\perp_{BJ} y$,
if $\|x\|\leq \|x + \gamma y\|$ for all $\gamma\in\mathbb{C}$. It is easy to see that Birkhoff--James orthogonality is equivalent to the usual
orthogonality in case $\mathscr{V}$ is a Hilbert space. Orthogonality of matrices and some distance problems are presented in \cite{B.S}. When $\mathscr{V}$ is a Hilbert $C^*$-module, some interesting characterizations of Birkhoff--James orthogonality were given by Aramba\v{s}i\'{c} and Raji\'{c} \cite{A.R.2}; see also \cite{A.R.1}. The same result is later obtained in \cite{B.G} by using a different approach.

The purpose of this paper is to investigate the norm-parallelism in the setting of Hilbert $C^{*}$-modules and present some relationship between norm-parallelism and Birkhoff--James orthogonality. We give some characterizations of the norm-parallelism for elements of $\mathbb{B}(\mathscr{H})$ defined on a finite dimensional Hilbert space and for elements of a $C^*$-algebra. In addition, we show that for $T\in\mathbb{B}(\mathscr{H})$ if the norm attaining
set $M_T$ is a unit sphere of some finite dimensional
subspace $\mathscr{H}_0$ and $\|T\|_{{\mathscr{H}_0}^\perp} < \|T\|$, then $T\parallel S$ implies that there exists a unit vector $\xi$ such that $|[T\xi, S\xi]| = \|T\|\,\|S\|$, i.e., $T\xi$ and $S\xi$ are linearly dependent. Among other things, we consider the characterization of norm parallelism problem for operators on a finite dimensional Hilbert space when the operator norm is replaced by the Schatten $p$-norm.

\section{Results}
The characterization of the norm-parallelism for elements of a
Hilbert $C^*$-module was obtained
in \cite{M.Z}. The following result is a combination of Theorems 4.1 and 4.6 of \cite{M.Z}.
\begin{lemma}\cite{M.Z}\label{lemma.280}
For $x , y\in \mathscr{X}$ the following statements are equivalent:
\begin{itemize}
\item[(i)] $x\parallel y$.
\item[(ii)] There exists a state $\varphi$ over $\mathscr A$ such that $|\varphi({\langle x, y\rangle})| = \|x\|\|y\|$.
\item[(iii)] ${\langle x, x\rangle}\parallel{\langle x, y\rangle}$ and $\|{\langle x, y\rangle}\|=\|x\|\|y\|$.
\item[(iv)] $r({\langle x, y\rangle})=\|{\langle x, y\rangle}\|=\|x\|\|y\|$.
\end{itemize}
\end{lemma}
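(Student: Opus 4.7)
The plan is to close the cycle (i) $\Rightarrow$ (ii) $\Rightarrow$ (iii) $\Rightarrow$ (iv) $\Rightarrow$ (ii) $\Rightarrow$ (i), pivoting on (ii). The three main ingredients are Hahn--Banach (to produce states attaining the norm of positive elements of $\mathscr{A}$), the semi-inner-product Cauchy--Schwarz inequality $|\varphi(\langle u, v\rangle)|^2 \le \varphi(\langle u, u\rangle)\varphi(\langle v, v\rangle)$ recalled in the preliminaries, and the GNS construction.

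For (i) $\Leftrightarrow$ (ii): starting from $\|x + \lambda y\| = \|x\| + \|y\|$ with $\lambda \in \mathbb{T}$, I pick a state $\varphi$ on $\mathscr{A}$ with $\varphi(\langle x + \lambda y, x + \lambda y\rangle) = (\|x\| + \|y\|)^2$, expand, and sandwich each summand against the natural bound: $\varphi(\langle x, x\rangle) \le \|x\|^2$, $\varphi(\langle y, y\rangle) \le \|y\|^2$, and $|\varphi(\langle x, y\rangle)| \le \|x\|\|y\|$. Equality at the top of the chain forces equality at every step, in particular (ii) together with $\varphi(\langle x, x\rangle) = \|x\|^2$ and $\varphi(\langle y, y\rangle) = \|y\|^2$. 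For (ii) $\Rightarrow$ (i), reverse the calculation: choose $\mu \in \mathbb{T}$ with $\mu\varphi(\langle x, y\rangle) = \|x\|\|y\|$ and expand $\varphi(\langle x + \mu y, x + \mu y\rangle)$ to obtain $\|x + \mu y\|^2 \ge (\|x\| + \|y\|)^2$.

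For (ii) $\Leftrightarrow$ (iii): the pinch $\|x\|\|y\| = |\varphi(\langle x, y\rangle)| \le \|\langle x, y\rangle\| \le \|x\|\|y\|$ yields $\|\langle x, y\rangle\| = \|x\|\|y\|$, and with the same $\varphi$ and phase $\mu$ one obtains $\|\langle x, x\rangle + \mu\langle x, y\rangle\| \ge \varphi(\langle x, x\rangle + \mu\langle x, y\rangle) = \|x\|^2 + \|x\|\|y\|$, establishing the $C^*$-algebraic parallelism. Conversely, writing $z = \langle x, x\rangle + \mu\langle x, y\rangle$ and choosing a state $\psi$ with $\psi(z^*z) = \|z\|^2 = (\|x\|^2 + \|x\|\|y\|)^2$, we have $\|\pi_\psi(z)\xi_\psi\| = \|x\|^2 + \|x\|\|y\|$; the Hilbert-space triangle equality in $\mathscr{H}_\psi$ then forces $\|\pi_\psi(\langle x, x\rangle)\xi_\psi\| = \|x\|^2$ and $\|\pi_\psi(\mu\langle x, y\rangle)\xi_\psi\| = \|x\|\|y\|$. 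The spectral theorem applied to the positive operator $\pi_\psi(\langle x, x\rangle)$ gives $\pi_\psi(\langle x, x\rangle)\xi_\psi = \|x\|^2\xi_\psi$, and the triangle-equality rigidity then yields $\pi_\psi(\mu\langle x, y\rangle)\xi_\psi = \|x\|\|y\|\,\xi_\psi$, so $|\psi(\langle x, y\rangle)| = \|x\|\|y\|$.

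For (ii) $\Leftrightarrow$ (iv): in the GNS representation $\pi_\varphi$ on $\mathscr{H}_\varphi$ with cyclic unit vector $\xi_\varphi$, the identities $|\varphi(\langle x, y\rangle)| = \|\langle x, y\rangle\|$ and $\|\pi_\varphi(\langle x, y\rangle)\xi_\varphi\| \le \|\langle x, y\rangle\|$ force the equality case of Cauchy--Schwarz in $\mathscr{H}_\varphi$, so that $\pi_\varphi(\langle x, y\rangle)\xi_\varphi = \varphi(\langle x, y\rangle)\xi_\varphi$; iterating gives $\|\langle x, y\rangle^n\| \ge |\varphi(\langle x, y\rangle)|^n = \|\langle x, y\rangle\|^n$, and the spectral-radius formula produces $r(\langle x, y\rangle) = \|\langle x, y\rangle\|$. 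For (iv) $\Rightarrow$ (ii), Bohnenblust--Karlin places $\sigma(\langle x, y\rangle)$ inside the algebra numerical range $\{\psi(\langle x, y\rangle) : \psi \text{ a state on the unitization of } \mathscr{A}\}$, so $r(\langle x, y\rangle) = \|x\|\|y\|$ combined with weak*-compactness of the state space produces a state $\psi$ with $|\psi(\langle x, y\rangle)| = \|x\|\|y\|$. The most delicate point I anticipate is (iii) $\Rightarrow$ (ii): since $\langle x, x\rangle + \mu\langle x, y\rangle$ is not self-adjoint, no state attains its norm directly, and the trick is to route through $z^*z$ and exploit the GNS rigidity made available by positivity of the summand $\langle x, x\rangle$.
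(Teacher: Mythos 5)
The paper itself gives no proof of this lemma: it is imported verbatim from \cite{M.Z} (as a combination of Theorems 4.1 and 4.6 there), so there is no in-paper argument to compare against. Your proof is correct and self-contained, and it runs on exactly the toolkit that \cite{M.Z} and the rest of this paper use: states attaining the norm of a positive element, the semi-inner-product Cauchy--Schwarz inequality $|\varphi(\langle u,v\rangle)|^2\le\varphi(\langle u,u\rangle)\varphi(\langle v,v\rangle)$, and the inclusion $\sigma(a)\subseteq W_0(a)$. The sandwich argument for (i)$\Leftrightarrow$(ii), the pinch $\|x\|\,\|y\|=|\varphi(\langle x,y\rangle)|\le\|\langle x,y\rangle\|\le\|x\|\,\|y\|$ for (ii)$\Rightarrow$(iii), and the eigenvector trick $\pi_\varphi(a)\xi_\varphi=\varphi(a)\xi_\varphi$ followed by $\|a^n\|\ge|\varphi(a)|^n$ for (ii)$\Rightarrow$(iv) all check out; your GNS/triangle-rigidity treatment of (iii)$\Rightarrow$(ii), which you correctly flag as the delicate direction, also works because the positive summand $\langle x,x\rangle$ forces $\pi_\psi(\langle x,x\rangle)\xi_\psi=\|x\|^2\xi_\psi$ and the equality case of the Hilbert-space triangle inequality then pins down $\pi_\psi(\mu\langle x,y\rangle)\xi_\psi$. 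Two small points should be made explicit: first, dispatch the degenerate case $x=0$ or $y=0$ at the outset, since several steps divide by $\|x\|$ or $\|y\|$ and the triangle-rigidity step needs the vector $\pi_\psi(\langle x,x\rangle)\xi_\psi$ to be nonzero; second, in (iv)$\Rightarrow$(ii) for non-unital $\mathscr A$ the state furnished by $\sigma(a)\subseteq W_0(a)$ a priori lives on the unitization, and one should note that $|\psi(\langle x,y\rangle)|=\|\langle x,y\rangle\|$ forces the restriction of $\psi$ to $\mathscr A$ to have norm one, hence to be a state of $\mathscr A$ as statement (ii) demands. Neither point affects the validity of the argument.
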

In the case of a Hilbert $C^*$-module over the $C^*$-algebra $\mathbb{K}(\mathscr{H})$ of compact operators, we have the following result.
\begin{proposition}\label{pr.03.5}
Let $\mathscr{X}$ be a Hilbert $\mathbb{K}(\mathscr{H})$-module. For $x , y\in \mathscr{X}$ the following statements are equivalent:
\begin{itemize}
\item[(i)] $x\parallel y$.
\item[(ii)] There exists a positive operator $P\in\mathcal{C}_1(\mathscr{H})$ of trace one such that $|{\rm tr}(P{\langle x, y\rangle})| = \|x\|\|y\|$.
\end{itemize}
\end{proposition}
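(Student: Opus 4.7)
The plan is to reduce the statement directly to Lemma \ref{lemma.280} by invoking the standard duality between $\mathbb{K}(\mathscr{H})$ and the trace-class operators $\mathcal{C}_1(\mathscr{H})$. Recall that the map $P \mapsto \varphi_P$, where $\varphi_P(T) = {\rm tr}(PT)$, is an isometric isomorphism from $\mathcal{C}_1(\mathscr{H})$ onto the dual space $\mathbb{K}(\mathscr{H})^*$. Moreover, under this correspondence $\varphi_P$ is a positive functional precisely when $P$ is a positive operator, and in that case $\|\varphi_P\| = \|P\|_1 = {\rm tr}(P)$. Thus states on $\mathbb{K}(\mathscr{H})$ are in bijection with positive trace-class operators of trace one.

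For the implication (i)$\Rightarrow$(ii), I would apply Lemma \ref{lemma.280} with $\mathscr A = \mathbb{K}(\mathscr{H})$ to obtain a state $\varphi$ on $\mathbb{K}(\mathscr{H})$ with $|\varphi(\langle x,y\rangle)| = \|x\|\|y\|$. The duality above produces a positive $P \in \mathcal{C}_1(\mathscr{H})$ with ${\rm tr}(P)=1$ such that $\varphi(T) = {\rm tr}(PT)$ for every $T\in\mathbb{K}(\mathscr{H})$. Since $\langle x, y\rangle \in \mathbb{K}(\mathscr{H})$, substituting yields $|{\rm tr}(P\langle x,y\rangle)| = \|x\|\|y\|$.

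For the converse (ii)$\Rightarrow$(i), given a positive $P\in\mathcal{C}_1(\mathscr{H})$ of trace one with $|{\rm tr}(P\langle x,y\rangle)| = \|x\|\|y\|$, I would define $\varphi(T) := {\rm tr}(PT)$ for $T\in\mathbb{K}(\mathscr{H})$. This functional is linear, positive (because $P\geq 0$), and of norm $\|P\|_1 = {\rm tr}(P) = 1$, hence a state on $\mathbb{K}(\mathscr{H})$. The hypothesis becomes $|\varphi(\langle x,y\rangle)| = \|x\|\|y\|$, and Lemma \ref{lemma.280} then gives $x\parallel y$.

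There is essentially no obstacle beyond quoting the correct form of the Schatten duality; the only minor point requiring attention is ensuring that the argument of $\varphi$ actually lies in $\mathbb{K}(\mathscr{H})$ (so that the trace formula applies), which is immediate since $\langle x,y\rangle$ is by definition an element of $\mathbb{K}(\mathscr{H})$.
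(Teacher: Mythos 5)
Your proposal is correct and follows essentially the same route as the paper: both directions reduce to the equivalence (i)$\Leftrightarrow$(ii) of Lemma \ref{lemma.280} via the standard representation of states on $\mathbb{K}(\mathscr{H})$ by positive trace-class operators of trace one (the paper cites \cite[Theorem 4.2.1]{mor} for exactly this duality). No gaps.
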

\begin{proof}
(i)$\Rightarrow$(ii) Let $x\parallel y$. By the equivalence $(i) \Leftrightarrow (ii)$ of Lemma \ref{lemma.280}, there exists a state $\varphi$ over $\mathbb{K}(\mathscr{H})$ such that $|\varphi({\langle x, y\rangle})| = \|x\|\|y\|$. By \cite[Theorem 4.2.1]{mor}, there exists a positive operator $P\in\mathcal{C}_1(\mathscr{H})$ of trace one such that $\varphi(T) = {\rm tr}(PT)$, $T\in\mathbb{K}(\mathscr{H})$. Thus we have $|{\rm tr}(P{\langle x, y\rangle})| = \|x\|\|y\|$.\\
(ii)$\Rightarrow$(i) Suppose that there exists a positive trace one operator $P\in\mathcal{C}_1(\mathscr{H})$ such that $|{\rm tr}(P{\langle x, y\rangle})| = \|x\|\|y\|$.
Define the state $\varphi$ over $\mathbb{K}(\mathscr{H})$ by $$\varphi(T) = {\rm tr}(PT)\qquad(T\in \mathbb{K}(\mathscr{H})).$$
We therefore have $|\varphi({\langle x, y\rangle})| = \|x\|\|y\|$. It follows from the equivalence $(i) \Leftrightarrow (ii)$ of Lemma \ref{lemma.280} that $x\parallel y$.
\end{proof}
We have the following characterization of the norm-parallelism for elements of a Hilbert $C^*$-module.
\begin{theorem}\label{th.05}
For $x , y\in \mathscr{X}$ the following statements are equivalent:
\begin{itemize}
\item[(i)] $x\parallel y$.
\item[(ii)] There exist a sequence of unit vectors $\{\xi_n\}$ in a Hilbert space $\mathscr{H}$ and $\lambda\in\mathbb{T}$ such that
$$\lim_{n\rightarrow\infty} \mbox{Re}[\langle x, \lambda y\rangle\xi_n, \xi_n]= \|x\|\,\|y\|.$$
\end{itemize}
\end{theorem}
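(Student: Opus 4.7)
The plan is to deduce this equivalence from Lemma \ref{lemma.280}(i)$\Leftrightarrow$(ii), using the GNS construction in one direction and weak-$*$ compactness of the state space in the other. Throughout I will identify $\mathscr A$ with its image under a faithful $*$-representation whenever the expression $[\langle x,\lambda y\rangle\xi_n,\xi_n]$ needs to be interpreted as a genuine action on a Hilbert space.

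For (i)$\Rightarrow$(ii), by Lemma \ref{lemma.280} there is a state $\varphi$ on $\mathscr A$ with $|\varphi(\langle x,y\rangle)|=\|x\|\|y\|$. Choose $\lambda\in\mathbb T$ with $\lambda\varphi(\langle x,y\rangle)=\|x\|\|y\|$; since the inner product is linear in the second slot,
$$\varphi(\langle x,\lambda y\rangle)=\lambda\varphi(\langle x,y\rangle)=\|x\|\|y\|,$$
a nonnegative real number. Apply the GNS construction (equivalently, use the universal representation) to obtain a Hilbert space $\mathscr H$ and a unit vector $\xi$ such that $\varphi(a)=[a\xi,\xi]$ for every $a\in\mathscr A$. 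Taking the constant sequence $\xi_n:=\xi$ then gives $\operatorname{Re}[\langle x,\lambda y\rangle\xi_n,\xi_n]=\|x\|\|y\|$ for every $n$, which yields (ii).

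For (ii)$\Rightarrow$(i), the chain of inequalities
$$\operatorname{Re}[\langle x,\lambda y\rangle\xi_n,\xi_n]\le\bigl|[\langle x,\lambda y\rangle\xi_n,\xi_n]\bigr|\le\|\langle x,\lambda y\rangle\|=\|\langle x,y\rangle\|\le\|x\|\|y\|$$
together with the hypothesis forces $\|\langle x,y\rangle\|=\|x\|\|y\|$ and $\bigl|[\langle x,\lambda y\rangle\xi_n,\xi_n]\bigr|\to\|x\|\|y\|$. Define positive functionals $\varphi_n(a):=[a\xi_n,\xi_n]$ on $\mathscr A$; each has $\|\varphi_n\|\le 1$. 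By weak-$*$ compactness of the closed unit ball of $\mathscr A^*$ (Banach--Alaoglu), a subnet of $\{\varphi_n\}$ converges weak-$*$ to some positive functional $\varphi$ with $\|\varphi\|\le 1$ and $|\varphi(\langle x,\lambda y\rangle)|=\|x\|\|y\|$. The key point is that
$$\|x\|\|y\|=|\varphi(\langle x,\lambda y\rangle)|\le\|\varphi\|\,\|\langle x,\lambda y\rangle\|=\|\varphi\|\,\|\langle x,y\rangle\|=\|\varphi\|\,\|x\|\|y\|$$
forces $\|\varphi\|=1$, so $\varphi$ is a state. Since $|\varphi(\langle x,y\rangle)|=|\varphi(\langle x,\lambda y\rangle)|=\|x\|\|y\|$, another application of Lemma \ref{lemma.280}(ii)$\Rightarrow$(i) yields $x\parallel y$.

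The main technical care needed is precisely in the last paragraph: when $\mathscr A$ is non-unital the vector functionals $\varphi_n$ need not be unit-norm, so the weak-$*$ limit a priori lies only in the positive part of the unit ball rather than the state space; the observation that the equality $|\varphi(\langle x,\lambda y\rangle)|=\|x\|\|y\|$ itself forces $\|\varphi\|=1$ is what closes the argument without any unitality hypothesis.
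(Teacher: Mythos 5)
Your proof is correct, but it takes a genuinely different route from the paper's. You deduce the theorem entirely from Lemma \ref{lemma.280}\,(i)$\Leftrightarrow$(ii): in the forward direction you realize the extremal state as a vector state via the GNS (or universal) representation, so your sequence is constant; in the reverse direction you pass the vector functionals $\varphi_n(a)=[a\xi_n,\xi_n]$ to a weak-$*$ cluster point and use the extremal equality $|\varphi(\langle x,\lambda y\rangle)|=\|x\|\,\|y\|$ itself to force $\|\varphi\|=1$, which correctly handles the non-unital case. The paper never invokes Lemma \ref{lemma.280} here and argues directly: for (i)$\Rightarrow$(ii) it embeds $\mathscr A$ into some $\mathbb{B}(\mathscr H)$ by Gelfand--Naimark, chooses unit vectors nearly attaining $\|\langle x+\lambda y, x+\lambda y\rangle\|=(\|x\|+\|y\|)^2$, and isolates $\lim_n \mbox{Re}[\langle x,\lambda y\rangle\xi_n,\xi_n]$ by a squeeze argument; for (ii)$\Rightarrow$(i) it uses the module Cauchy--Schwarz inequality to get $\lim_n[\langle x,x\rangle\xi_n,\xi_n]=\|x\|^2$ and $\lim_n[\langle y,y\rangle\xi_n,\xi_n]=\|y\|^2$ and then verifies $\|x+\lambda y\|=\|x\|+\|y\|$ outright. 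Your version is shorter and makes the logical dependence on the state characterization transparent; the paper's version is self-contained, keeps the same sequence throughout rather than passing to a subnet, and generates along the way the auxiliary limits that are echoed later (in Corollary \ref{cr.055} and Theorem \ref{th.17.5}). Two small points to tidy: dispose of the trivial case $\|x\|\,\|y\|=0$ before dividing by it in the $\|\varphi\|=1$ step, and state explicitly that in (ii)$\Rightarrow$(i) the expression $[\langle x,\lambda y\rangle\xi_n,\xi_n]$ presupposes a $*$-representation of $\mathscr A$ on $\mathscr H$, under which your norm estimates survive because $*$-homomorphisms are contractive.
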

\begin{proof}
(i)$\Rightarrow$(ii) Let $x\parallel y$. Hence $\|x + \lambda y\| = \|x\| + \|y\|$ for some $\lambda\in\mathbb{T}$.
By the Gelfand--Naimark theorem we can regard $\mathscr A$ as a $C^*$-subalgebra of $\mathbb{B}(\mathscr{H})$ for some Hilbert space $(\mathscr{H}, [., .])$. Since
$$\|\langle x + \lambda y, x + \lambda y\rangle\| = \sup\Big\{[\langle x + \lambda y, x + \lambda y\rangle\xi, \xi]:\, \xi\in\mathscr{H},\|\xi\|=1\Big\},$$
there exists a sequence of unit vectors $\{\xi_n\}$ in $\mathscr{H}$ such that
\begin{align}\label{id.14}
\lim_{n\rightarrow\infty} [\langle x + \lambda y, x + \lambda y\rangle\xi_n, \xi_n] = \|\langle x + \lambda y, x + \lambda y\rangle\| = \|x + \lambda y\|^2 = (\|x\| + \|y\|)^2.
\end{align}
We have
\begin{align*}
[\langle x + \lambda y, x + \lambda y\rangle\xi_n, \xi_n ]&= [\langle x, x\rangle\xi_n, \xi_n] + 2\mbox{Re}[\langle x, \lambda y\rangle\xi_n, \xi_n] + [\langle y, y\rangle\xi_n, \xi_n]
\\& \leq \|x\|^2 + 2\mbox{Re}[\langle x, \lambda y\rangle\xi_n, \xi_n] + \|y\|^2
\\& \leq \|x\|^2 + 2\Big|[\langle x, y\rangle\xi_n, \xi_n]\Big| + \|y\|^2
\\& \leq \|x\|^2 + 2\|\langle x, y\rangle\| + \|y\|^2
\\& \leq \|x\|^2 + 2\|x\|\,\|y\| + \|y\|^2,
\end{align*}
whence
\begin{align}\label{id.14.5}
[\langle x + \lambda y, x + \lambda y\rangle\xi_n, \xi_n ] \leq \|x\|^2 + 2\mbox{Re}[\langle x, \lambda y\rangle\xi_n, \xi_n] + \|y\|^2 \leq \|x\|^2 + 2\|x\|\,\|y\| + \|y\|^2
\end{align}
and
\begin{align}\label{id.14.6}
[\langle x + \lambda y, x + \lambda y\rangle\xi_n, \xi_n ] \leq \|x\|^2 + 2\Big|[\langle x, y\rangle\xi_n, \xi_n]\Big| + \|y\|^2 \leq \|x\|^2 + 2\|x\|\,\|y\| + \|y\|^2.
\end{align}
From (\ref{id.14.5}) and (\ref{id.14.6}) we get
\begin{align}\label{id.15}
\frac{1}{2}\Big([\langle x + \lambda y, x + \lambda y\rangle\xi_n, \xi_n ] - \|x\|^2 - \|y\|^2\Big) \leq \mbox{Re}[\langle x, \lambda y\rangle\xi_n, \xi_n] \leq \|x\|\,\|y\|
\end{align}
and
\begin{align}\label{id.15.1}
\frac{1}{2}\Big([\langle x + \lambda y, x + \lambda y\rangle\xi_n, \xi_n ] - \|x\|^2 - \|y\|^2\Big) \leq \Big|[\langle x, y\rangle\xi_n, \xi_n]\Big| \leq \|x\|\,\|y\|.
\end{align}
According to (\ref{id.14}) we obtain
\begin{align}\label{id.15.11}
\lim_{n\rightarrow\infty} \frac{1}{2}\Big([\langle x + \lambda y, x + \lambda y\rangle\xi_n, \xi_n ] - \|x\|^2 - \|y\|^2\Big) = \|x\|\,\|y\|.
\end{align}
By (\ref{id.15}), (\ref{id.15.1}), (\ref{id.15.11}) and the Squeeze Theorem we conclude that $\lim_{n\rightarrow\infty} \mbox{Re}[\langle x, \lambda y\rangle\xi_n, \xi_n]$ and $\lim_{n\rightarrow\infty} \Big|[\langle x, y\rangle\xi_n, \xi_n]\Big|$ exist and
$$\lim_{n\rightarrow\infty} \mbox{Re}[\langle x, \lambda y\rangle\xi_n, \xi_n]= \|x\|\,\|y\| \qquad \mbox{and} \qquad \lim_{n\rightarrow\infty} \Big|[\langle x, y\rangle\xi_n, \xi_n]\Big| = \|x\|\,\|y\|.$$
(ii)$\Rightarrow$(i) Suppose that (ii) holds. We may assume that $x \neq 0$. By the Cauchy–-Schwarz inequality we have
\begin{align*}
\mbox{Re}[\langle x, \lambda y\rangle\xi_n, \xi_n] &\leq \Big|[\langle x, y\rangle\xi_n, \xi_n]\Big| \leq \|\langle x, y\rangle\xi_n\|
\\& = \sqrt{[{\langle x, y\rangle}^*\langle x, y\rangle\xi_n, \xi_n]} \qquad(\,\mbox{since}\,\, {\langle x, y\rangle}^*\langle x, y\rangle \leq \|\langle x, x\rangle\|\langle y, y\rangle)
\\&\leq \sqrt{[\|x\|^2\langle y, y\rangle\xi_n, \xi_n]} = \|x\| \sqrt{[\langle y, y\rangle\xi_n, \xi_n]}
\\&\leq \|x\| \sqrt{\|\langle y, y\rangle\xi_n\|}\leq \|x\|\,\|y\|,
\end{align*}
whence
\begin{align}\label{id.15.2}
\frac{(\mbox{Re}[\langle x, \lambda y\rangle\xi_n, \xi_n])^2}{\|x\|^2}\leq [\langle y, y\rangle\xi_n, \xi_n] \leq \|y\|^2.
\end{align}
Hence, by taking limits in (\ref{id.15.2}) and employing assumption (ii), we get
\begin{align}\label{id.16}
\lim_{n\rightarrow\infty} [\langle y, y\rangle\xi_n, \xi_n] = \|y\|^2.
\end{align}
By using a similar argument we conclude that
\begin{align}\label{id.17}
\lim_{n\rightarrow\infty} [\langle x, x\rangle\xi_n, \xi_n]=\|x\|^2.
\end{align}
By (ii), (\ref{id.16}) and (\ref{id.17}) we therefore reach
\begin{align*}
(\|x\| + \|y\|)^2 &= \lim_{n\rightarrow\infty} [\langle x, x\rangle\xi_n, \xi_n] + 2\lim_{n\rightarrow\infty} \mbox{Re}[\langle x, \lambda y\rangle\xi_n, \xi_n] + \lim_{n\rightarrow\infty} [\langle y, y\rangle\xi_n, \xi_n]
\\& = \lim_{n\rightarrow\infty} [\langle x + \lambda y, x + \lambda y\rangle\xi_n, \xi_n]
\\& \leq \|x + \lambda y\|^2 \leq (\|x\| + \|y\|)^2.
\end{align*}
Hence $\|x + \lambda y\| = \|x\| + \|y\|$. Thus $x\parallel y$.
\end{proof}
If $x, y$ are elements of a normed space $\mathscr{V}$ then, by the Hahn–-Banach theorem,  $x$ is orthogonal to $y$
in the Birkhoff--James sense if and only if there is a norm one linear functional $f$ on $\mathscr{V}$ such that $f(x) = \|x\|$ and $f(y) = 0$.
In the case of a Hilbert $\mathscr A$-module the Birkhoff--James
orthogonality can be described in the way which is more natural for Hilbert $C^*$-modules, i.e., $x\perp_{BJ} y$ if and only if there is a state $\varphi$ over $\mathscr A$ such that $\varphi(\langle x, x\rangle)=\|x\|^2$ and $\varphi(\langle x, y\rangle)=0$ (see \cite[Theorem 2.7]{A.R.2} and \cite[Theorem 4.4]{B.G}).
Now, we apply \cite[Theorem 2]{N.T} to obtain the following result.
\begin{theorem}\label{th.005}
Let $\mathscr{V}$ be a normed space. For $x, y\in \mathscr{V}$ there exists $\lambda\in\mathbb{T}$ such that the following statements are equivalent:
\begin{itemize}
\item[(i)] $x\parallel y$.
\item[(ii)] $x\perp_{BJ}(\|y\|x + \lambda \|x\|y)$.
\item[(ii)] $y\perp_{BJ}(\|x\|y + \overline{\lambda} \|y\|x)$.
\end{itemize}
\end{theorem}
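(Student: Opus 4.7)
The plan is to route everything through the Hahn--Banach description of Birkhoff--James orthogonality already recalled just before the theorem, namely that in a normed space $\mathscr{V}$ one has $u\perp_{BJ}v$ iff there exists a norm-one $f\in\mathscr{V}^{*}$ with $f(u)=\|u\|$ and $f(v)=0$, together with the twin classical fact that $\|u+v\|=\|u\|+\|v\|$ holds iff there exists a norm-one $f$ with $f(u)=\|u\|$ and $f(v)=\|v\|$. Both (i) and (ii) will be rephrased as the existence of a single norm-one functional $f$ that pins $x$ to $\|x\|$ and $y$ to a unimodular multiple of $\|y\|$, and the scalar $\lambda$ in the statement will arise as the bookkeeping between these two pinning conditions via the identification $\mu=-\lambda$.

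For (i)$\Rightarrow$(ii) I would choose $\mu\in\mathbb{T}$ with $\|x+\mu y\|=\|x\|+\|y\|$, pick a norm-one $f$ attaining this value, and argue that the inequalities $|f(x)|\le\|x\|$ and $|f(\mu y)|\le\|y\|$ together with the attained value force $f(x)=\|x\|$ and $f(y)=\overline{\mu}\|y\|$. Setting $\lambda:=-\mu$ then gives $f(\|y\|x+\lambda\|x\|y)=\|x\|\|y\|(1+\lambda\overline{\mu})=0$, whence (ii) by the Birkhoff--James criterion. For (ii)$\Rightarrow$(i) I would run the chain backwards: the criterion supplies $f$ with $f(x)=\|x\|$ and $f(\|y\|x+\lambda\|x\|y)=0$, forcing $f(y)=-\overline{\lambda}\|y\|$; putting $\mu:=-\lambda$ yields $f(x+\mu y)=\|x\|+\|y\|$, so the triangle inequality is saturated and $x\parallel y$.

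The equivalence (i)$\Leftrightarrow$(iii) will follow by invoking the symmetry $x\parallel y\Leftrightarrow y\parallel x$ and applying the already-established (i)$\Leftrightarrow$(ii) to the swapped pair $(y,x)$: if $\mu$ realizes $\|x+\mu y\|=\|x\|+\|y\|$, then multiplying inside by $\overline{\mu}$ gives $\|y+\overline{\mu}x\|=\|y\|+\|x\|$, so $\overline{\mu}$ is the corresponding parameter for $y\parallel x$, and the previous step's substitution $\lambda=-\mu$ delivers $\overline{\lambda}=-\overline{\mu}$ as the parameter attached to (iii), matching the statement. The main obstacle is mild and purely notational, keeping the complex conjugates straight so that one single $\lambda\in\mathbb{T}$ witnesses both (ii) and (iii) at once; once the identification $\mu=-\lambda$ is fixed, the rest is a direct linearity calculation, and \cite[Theorem~2]{N.T} presumably packages exactly this Hahn--Banach translation of the triangular equality into Birkhoff--James form, so the theorem follows by feeding $\mu y$ into it in place of $y$.
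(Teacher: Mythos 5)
Your proposal is correct. Note that the paper itself supplies no written argument for this theorem: it simply invokes \cite[Theorem 2]{N.T}, which is precisely the Nakamoto--Takahasi packaging of the triangle-equality condition into Birkhoff--James form. Your blind reconstruction is therefore essentially the intended proof, written out in full: the two Hahn--Banach facts you use (that $u\perp_{BJ}v$ iff some norm-one $f$ satisfies $f(u)=\|u\|$, $f(v)=0$, and that $\|u+v\|=\|u\|+\|v\|$ iff some norm-one $f$ pins both $u$ and $v$ to their norms) are exactly the recalled ingredients, and your bookkeeping $\lambda=-\mu$, giving $f(y)=\overline{\mu}\,\|y\|$ and hence $f(\|y\|x+\lambda\|x\|y)=\|x\|\,\|y\|(1-|\mu|^2)=0$, is the right computation; the conjugate in (iii) comes out correctly from $\|y+\overline{\mu}x\|=\|x+\mu y\|$. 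Two small points worth making explicit: in the direction (ii)$\Rightarrow$(i) you divide by $\lambda\|x\|$ to extract $f(y)=-\overline{\lambda}\,\|y\|$, so the degenerate cases $x=0$ or $y=0$ should be dispatched separately (they are trivial, since then all three statements hold for every $\lambda$); and the slightly odd quantifier structure of the statement (``there exists $\lambda$ such that the three are equivalent'') is handled by your argument because (ii)$\Rightarrow$(i) and (iii)$\Rightarrow$(i) hold for \emph{every} $\lambda$, so when (i) fails the equivalence is vacuously true for any $\lambda$, and when (i) holds your constructed $\lambda$ witnesses both (ii) and (iii).
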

\begin{corollary}\label{cr.17.9}
Let $x , y\in \mathscr{X}$. If $x\parallel y$, then there exists $\lambda\in\mathbb{T}$ such that
$$x\perp_{BJ}(\|y\|x\langle x, x\rangle + \lambda \|x\|x\langle x, y\rangle) \qquad \mbox{and} \qquad y\perp_{BJ}(\|x\|y\langle y, y\rangle + \lambda \|y\|y\langle x, y\rangle).$$
\end{corollary}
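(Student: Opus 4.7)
The plan is to combine Theorem~\ref{th.005} with the state-theoretic description of Birkhoff--James orthogonality in Hilbert $C^*$-modules recalled just before that theorem, together with the following standard rigidity property of states: if $\omega$ is a state on $\mathscr A$, $a\in\mathscr A$ is positive, and $\omega(a)=\|a\|$, then $\omega(ab)=\|a\|\,\omega(b)$ for every $b\in\mathscr A$. (This follows by applying the Cauchy--Schwarz inequality for states to $(\|a\|-a)^{1/2}$ and $b$, noting that $0\le (\|a\|-a)^2\le \|a\|(\|a\|-a)$ so $\omega((\|a\|-a)^2)\le \|a\|\,\omega(\|a\|-a)=0$; passing to the unitization if $\mathscr A$ is non-unital.)

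From $x\parallel y$, Theorem~\ref{th.005} furnishes $\lambda\in\mathbb{T}$ with $x\perp_{BJ}(\|y\|x+\lambda\|x\|y)$ and $y\perp_{BJ}(\|x\|y+\overline{\lambda}\|y\|x)$. The Aramba\v{s}i\'{c}--Raji\'{c} characterization then supplies states $\varphi,\psi$ on $\mathscr A$ with $\varphi(\langle x,x\rangle)=\|x\|^2$, $\psi(\langle y,y\rangle)=\|y\|^2$, and the orthogonality conditions become $\|y\|\|x\|^2+\lambda\|x\|\,\varphi(\langle x,y\rangle)=0$ and $\|x\|\|y\|^2+\overline{\lambda}\|y\|\,\psi(\langle y,x\rangle)=0$. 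Solving (and using $\langle y,x\rangle=\langle x,y\rangle^{*}$) yields $\varphi(\langle x,y\rangle)=-\overline{\lambda}\|x\|\|y\|$ and $\psi(\langle x,y\rangle)=-\overline{\lambda}\|x\|\|y\|$.

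The first claimed orthogonality is now a one-line computation. Since $\varphi(\langle x,x\rangle)=\|x\|^{2}=\|\langle x,x\rangle\|$, the rigidity lemma gives $\varphi(\langle x,x\rangle\,b)=\|x\|^2\varphi(b)$; coupled with the identities $\langle x,x\langle x,x\rangle\rangle=\langle x,x\rangle^2$ and $\langle x,x\langle x,y\rangle\rangle=\langle x,x\rangle\langle x,y\rangle$, this yields
\[
\varphi\bigl(\langle x,\,\|y\|x\langle x,x\rangle+\lambda\|x\|x\langle x,y\rangle\rangle\bigr)=\|y\|\|x\|^{4}+\lambda\|x\|^{3}\bigl(-\overline{\lambda}\|x\|\|y\|\bigr)=0,
\]
so the state characterization of $\perp_{BJ}$ delivers $x\perp_{BJ}(\|y\|x\langle x,x\rangle+\lambda\|x\|x\langle x,y\rangle)$. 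The same argument with $\psi$, the rigidity lemma applied to $a=\langle y,y\rangle$, and the value of $\psi(\langle x,y\rangle)$ computed above gives the second orthogonality.

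I expect no serious obstacle; the only delicate point is the bookkeeping of $\lambda$ versus $\overline{\lambda}$. Theorem~\ref{th.005}(iii) contains $\overline{\lambda}$, but after passing through $\psi(\langle y,x\rangle)=\overline{\psi(\langle x,y\rangle)}$, the scalar that emerges inside the bracketed expression becomes $\lambda$, matching the first relation and confirming the symmetric formulation of the corollary. Everything else is routine once the two states $\varphi,\psi$ and the rigidity identity are in hand.
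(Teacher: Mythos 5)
Your proof is correct, and it reaches the conclusion by a genuinely different mechanism than the paper's. Both arguments start identically: Theorem~\ref{th.005} produces $\lambda$, and the Aramba\v{s}i\'{c}--Raji\'{c} state characterization of $\perp_{BJ}$ produces a state $\varphi$ with $\varphi(\langle x,x\rangle)=\|x\|^2$ and $\varphi(\langle x,\|y\|x+\lambda\|x\|y\rangle)=0$. From there the paper does \emph{not} keep $\varphi$: it first deduces $\langle x,x\rangle\perp_{BJ}\langle x,\|y\|x+\lambda\|x\|y\rangle$ inside $\mathscr A$ (viewed as a module over itself), invokes the characterization a \emph{second} time to get a new state $\psi$ annihilating $\langle x,x\rangle^{*}\langle x,\|y\|x+\lambda\|x\|y\rangle$, and then uses $\langle x,x\rangle^2\le\|x\|^2\langle x,x\rangle$ to force $\psi(\langle x,x\rangle)=\|x\|^2$. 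You instead retain the original state $\varphi$ and use the rigidity lemma (a state attaining the norm of a positive element is multiplicative against it), so that left multiplication by $\langle x,x\rangle$ acts as the scalar $\|x\|^2$ under $\varphi$; this collapses the whole verification to the one-line computation $\|y\|\|x\|^4+\lambda\|x\|^3(-\overline{\lambda}\|x\|\|y\|)=0$. Your route is shorter, avoids the second application of the characterization theorem, and as a bonus pins down the value $\varphi(\langle x,y\rangle)=-\overline{\lambda}\|x\|\|y\|$; moreover your explicit computation for the second relation verifies the statement exactly as written (with $\lambda\|y\|y\langle x,y\rangle$), where the paper's ``similar computation'' would more naturally land on $\overline{\lambda}\|y\|y\langle y,x\rangle$, so your handling of the $\lambda$ versus $\overline{\lambda}$ bookkeeping is a genuine point in your favor. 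The only cosmetic omission is that you divide by $\|x\|$ and $\|y\|$ when solving for $\varphi(\langle x,y\rangle)$ and $\psi(\langle y,x\rangle)$, so you should state at the outset that the cases $x=0$ or $y=0$ are trivial (the elements to which $x$ and $y$ are claimed orthogonal, or $x$, $y$ themselves, then vanish), exactly as the paper does with its ``we may assume $x\neq 0$.''
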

\begin{proof}
We may assume that $x\neq0$. By Theorem \ref{th.005}, there exists $\lambda\in\mathbb{T}$ such that
\begin{align}\label{id.17.9.1}
x\perp_{BJ}(\|y\|x + \lambda \|x\|y)
\end{align}
and
\begin{align}\label{id.17.9.2}
y\perp_{BJ}(\|x\|y + \overline{\lambda} \|y\|x).
\end{align}
By (\ref{id.17.9.1}) and \cite[Theorem 2.7]{A.R.2}, there is a state $\varphi$ over $\mathscr A$ such that
\begin{align}\label{id.17.91}
\varphi(\langle x, x\rangle) = \|x\|^2 \quad \mbox{and} \quad \varphi(\langle x, \|y\|x + \lambda \|x\|y\rangle) = 0.
\end{align}
Thus for every $\gamma\in\mathbb{C}$, by (\ref{id.17.91}), we obtain
\begin{align*}
\|\langle x, x\rangle\| = \|x\|^2 = \varphi\Big(\langle x, x\rangle + \gamma\langle x, \|y\|x + \lambda \|x\|y\rangle\Big) \leq \Big\|\langle x, x\rangle + \gamma\langle x, \|y\|x + \lambda \|x\|y\rangle\Big\|.
\end{align*}
Hence
\begin{align*}
\langle x, x\rangle \perp_{BJ}\langle x, \|y\|x + \lambda \|x\|y\rangle.
\end{align*}
So, there is a state $\psi$ over $\mathscr A$ such that
\begin{align*}
\psi({\langle x, x\rangle}^*\langle x, x\rangle)=\|\langle x, x\rangle\|^2 = \|x\|^4 \quad \mbox{and} \quad \psi({\langle x, x\rangle}^*\langle x, \|y\|x + \lambda \|x\|y\rangle) = 0.
\end{align*}
Since $\|x\|^4 = \psi({\langle x, x\rangle}^*\langle x, x\rangle) \leq \psi(\|x\|^2 \langle x, x\rangle) \leq \|x\|^4$, we get
\begin{align*}
\psi(\langle x, x\rangle) = \|x\|^2\,\, \mbox{and}\,\, \psi\Big(\langle x, \|y\|x\langle x, x\rangle + \lambda \|x\|x\langle x, y\rangle\rangle\Big) = \psi({\langle x, x\rangle}^*\langle x, \|y\|x + \lambda \|x\|y\rangle) = 0.
\end{align*}
Thus we conclude that $$x\perp_{BJ}(\|y\|x\langle x, x\rangle + \lambda \|x\|x\langle x, y\rangle).$$
By (\ref{id.17.9.2}) and a similar computation as above we get
$$y\perp_{BJ}(\|x\|y\langle y, y\rangle + \lambda \|y\|y\langle x, y\rangle).$$
\end{proof}
We have the following characterization of the norm-parallelism for certain elements of a Hilbert $C^{*}$-module.
\begin{theorem}\label{th.18}
Let $x , y\in \mathscr{X}$. If $\langle x, y\rangle$ is normal, then the following statements are equivalent:
\begin{itemize}
\item[(i)] $x\parallel y$.
\item[(ii)] $\|\langle x, y\rangle\| = \|x\|\,\|y\|$.
\end{itemize}
\end{theorem}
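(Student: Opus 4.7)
The plan is to reduce this to the equivalence $(i) \Leftrightarrow (iv)$ in Lemma \ref{lemma.280}, which already characterizes $x \parallel y$ via the identity $r(\langle x, y\rangle) = \|\langle x, y\rangle\| = \|x\|\|y\|$. The hypothesis that $\langle x, y\rangle$ is normal is precisely what is needed to bridge the gap between merely knowing $\|\langle x, y\rangle\| = \|x\|\|y\|$ and knowing the spectral-radius version of that equality.

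For $(i) \Rightarrow (ii)$, I would appeal directly to the implication $(i) \Rightarrow (iii)$ of Lemma \ref{lemma.280}, which already records $\|\langle x, y\rangle\| = \|x\|\|y\|$ among its conclusions. Notice that this direction does not require the normality hypothesis.

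For $(ii) \Rightarrow (i)$, the key step is the classical $C^*$-algebra fact that a normal element $a \in \mathscr A$ satisfies $r(a) = \|a\|$ (this follows from Gelfand's theorem applied to the commutative $C^*$-subalgebra generated by $a$, giving $\|a^n\| = \|a\|^n$ and hence $r(a) = \lim \|a^n\|^{1/n} = \|a\|$). Applying this with $a = \langle x, y\rangle$ and combining with the hypothesis $\|\langle x, y\rangle\| = \|x\|\|y\|$, we obtain
\[
r(\langle x, y\rangle) = \|\langle x, y\rangle\| = \|x\|\|y\|,
\]
so the implication $(iv) \Rightarrow (i)$ of Lemma \ref{lemma.280} yields $x \parallel y$.

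There is essentially no obstacle here: the whole content of the theorem is the observation that, when $\langle x, y\rangle$ is normal, condition (iv) of Lemma \ref{lemma.280} collapses to the single norm equality in (ii), since the spectral radius is automatically attained. The proof should therefore be very short, amounting to two invocations of Lemma \ref{lemma.280} together with the spectral radius formula for normal elements.
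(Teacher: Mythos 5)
Your proof is correct and follows essentially the same strategy as the paper: both directions reduce to Lemma~\ref{lemma.280}, with $(i)\Rightarrow(ii)$ read off from the equivalence $(i)\Leftrightarrow(iii)$ exactly as you do. The only (cosmetic) difference is in $(ii)\Rightarrow(i)$: the paper invokes the equivalence $(i)\Leftrightarrow(ii)$ of the lemma together with the existence of a state $\varphi$ with $|\varphi(a)|=\|a\|$ for normal $a$ (citing Murphy, Theorem~3.3.6), whereas you invoke $(i)\Leftrightarrow(iv)$ together with the spectral radius formula $r(a)=\|a\|$ for normal $a$; these two standard facts about normal elements are interchangeable here and both yield a one-line reduction.
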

\begin{proof}
(i)$\Rightarrow$(ii) This implication follows from the equivalence $(i) \Leftrightarrow (iii)$ of Lemma \ref{lemma.280}.\\
(ii)$\Rightarrow$(i) Suppose that $\|\langle x, y\rangle\| = \|x\|\,\|y\|$ holds. By \cite[Theorem 3.3.6]{mor}, for normal element $\langle x, y\rangle \in{\mathscr A}$, there exists a state $\varphi$ over ${\mathscr A}$ such that $|\varphi(\langle x, y\rangle)| = \|\langle x, y\rangle\| = \|x\|\,\|y\|$. Thus, by the equivalence $(i) \Leftrightarrow (ii)$ of Lemma \ref {lemma.280}, we conclude that $x\parallel y$.
\end{proof}
The following example shows that the normality condition cannot be
omitted in the implication (ii)$\Rightarrow$(i) of Theorem \ref{th.18}.
\begin{example}\label{ex.19}
Let $A =\begin{bmatrix}
1 & 1 \\
0 & 0
\end{bmatrix}\in \mathbb{B}(\mathbb{C}^2)=\mathbb{M}_2(\mathbb{C})$. Then $\langle A, I\rangle = A^*$ is not normal. We have
$$\|\langle A, I\rangle\| = \|A\|\,\|I\| = \sqrt{2}.$$
Since $r(A) = 1 \neq \sqrt{2} = \|A\|$, by the equivalence $(i) \Leftrightarrow (iv)$ of Lemma \ref{lemma.280}, we reach $A\nparallel I$.\\
Also, let $B =\begin{bmatrix}
2 & 5 \\
5 & 0
\end{bmatrix}$, $C =\begin{bmatrix}
1 & 0 \\
0 & -1
\end{bmatrix}$. Since $B$ and $C$ are self-adjoint, by \cite[Theorem 2.1.1]{mor}, we have $r(B) = \|B\|$ and $r(C) = \|C\|$. Thus, by the equivalence $(i) \Leftrightarrow (iv)$ of Lemma \ref{lemma.280}, we get $B\parallel I$ and $I\parallel C$. A simple computation shows that
$$\|\langle B, C\rangle\| = \Big\|\begin{bmatrix}
2 & 5 \\
-5 & 0
\end{bmatrix}\Big\| = \sqrt{26} + 1 \neq 5 = r(\langle B, C\rangle).$$
So, by the equivalence $(i) \Leftrightarrow (iv)$ of Lemma \ref{lemma.280}, we observe that $B\nparallel C$. Hence the norm-parallelism is not transitive.
\end{example}
\begin{proposition}\label{th.18.1}
Let ${\mathscr A}$ be a unital $C^*$-algebra with the identity $e$ and $x , y, z\in \mathscr{X}$. If $\langle z, z\rangle$ is invertible and $\langle x, y\rangle\in\mathbb{C}\cdot e$, then the following statements are equivalent:
\begin{itemize}
\item[(i)] $x\parallel y$.
\item[(ii)] $\theta_{z,x}\parallel \theta_{z,y}$.
\end{itemize}
\end{proposition}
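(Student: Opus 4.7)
The plan is to exploit the conjugate linearity of $\langle\cdot,\cdot\rangle$ in its first slot together with the elementary-operator norm identity $\|\theta_{a,b}\|=\|a\|\|b\|$ recorded in the introduction, using the invertibility of $\langle z,z\rangle$ only to ensure that $\|z\|>0$. The hypothesis $\langle x,y\rangle\in\mathbb{C}\cdot e$ enters at the level of an alternative, $C^*$-algebraic reformulation via Lemma~\ref{lemma.280}, which can be used as a cross-check.

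The main step is the direct observation that, for every $\lambda\in\mathbb{C}$ and every $w\in\mathscr{X}$,
\[
(\theta_{z,x}+\lambda\theta_{z,y})(w) = z\langle x,w\rangle+\lambda z\langle y,w\rangle = z\langle x+\bar\lambda y,\,w\rangle = \theta_{z,\,x+\bar\lambda y}(w),
\]
so that $\theta_{z,x}+\lambda\theta_{z,y}=\theta_{z,\,x+\bar\lambda y}$. Applying $\|\theta_{a,b}\|=\|a\|\|b\|$ then yields $\|\theta_{z,x}+\lambda\theta_{z,y}\| = \|z\|\,\|x+\bar\lambda y\|$ and $\|\theta_{z,x}\|+\|\theta_{z,y}\| = \|z\|\bigl(\|x\|+\|y\|\bigr)$. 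Invertibility of $\langle z,z\rangle$ gives $\|z\|>0$, so the equation $\|\theta_{z,x}+\lambda\theta_{z,y}\|=\|\theta_{z,x}\|+\|\theta_{z,y}\|$ for some $\lambda\in\mathbb{T}$ is equivalent, after cancelling $\|z\|$, to $\|x+\bar\lambda y\|=\|x\|+\|y\|$; writing $\mu=\bar\lambda$, this is exactly $x\parallel y$.

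As a consistency check one can apply Lemma~\ref{lemma.280}(iv) inside the $C^*$-algebra $\mathbb{K}(\mathscr{X})$: writing $c:=\langle z,z\rangle$, the rules $\theta_{a,b}^{\ast}=\theta_{b,a}$ and $\theta_{a,b}\theta_{c,d}=\theta_{a\langle b,c\rangle,d}$ give $\theta_{z,x}^{\ast}\theta_{z,y}=\theta_{xc,y}$, and the assumption $\langle x,y\rangle=\alpha e$ forces $\langle y,xc\rangle=\bar\alpha c$, so that $(\theta_{xc,y})^n=\bar\alpha^{n-1}\theta_{xc^n,y}$ by induction. Combined with $\|\theta_{a,b}\|=\|a\|\|b\|$ and the invertibility of $c$, both $r(\theta_{xc,y})$ and $\|\theta_{xc,y}\|$ reduce to $|\alpha|\|c\|$, while $\|\theta_{z,x}\|\|\theta_{z,y}\|=\|c\|\|x\|\|y\|$, so that Lemma~\ref{lemma.280}(iv) for $\theta_{z,x},\theta_{z,y}$ is $|\alpha|=\|x\|\|y\|$, i.e.\ the same numerical condition to which Lemma~\ref{lemma.280}(iv) reduces $x\parallel y$ (using $r(\alpha e)=|\alpha|=\|\alpha e\|$).

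The delicate point, and the main obstacle, is that the simple cancellation argument really does depend on $\|\theta_{z,u}\|=\|z\|\|u\|$ holding on the specific elements $u=x,\,y,\,x+\bar\lambda y$; while the introduction states this as a general fact, the alternative $C^*$-algebraic route above makes the hypotheses visible by controlling $\|\theta_{xc^n,y}\|^{1/n}$ via invertibility of $c$ and by using the scalar form of $\langle x,y\rangle$ to reduce the iteration $(\theta_{xc,y})^n$ to a clean scalar multiple of $\theta_{xc^n,y}$.
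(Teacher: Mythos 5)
Your main step collapses at exactly the point you flag as delicate: the identity $\|\theta_{a,b}\|=\|a\|\,\|b\|$ is false in a general Hilbert $C^*$-module, the paper's introduction notwithstanding. Take $\mathscr{A}=\mathscr{X}=\mathbb{C}\oplus\mathbb{C}$ with $\langle a,b\rangle=a^*b$; then $\theta_{a,b}$ is multiplication by $a\overline{b}$, so $a=(1,0)$, $b=(0,1)$ give $\theta_{a,b}=0$ while $\|a\|\,\|b\|=1$. Only $\|\theta_{a,b}\|\le\|a\|\,\|b\|$ holds in general, and that inequality points the wrong way for your cancellation. The identity $\|\theta_{z,u}\|=\|z\|\,\|u\|$ is valid for all $u$ when $\langle z,z\rangle=e$ (since $\theta_{z,u}^{*}\theta_{z,u}=\theta_{u\langle z,z\rangle,u}=\theta_{u,u}$ and $\|\theta_{u,u}\|=\|u\|^{2}$), but mere invertibility of $\langle z,z\rangle$ does not suffice: with $z=(1,2)$ and $u=(2,1)$ in the example above, $\langle z,z\rangle=(1,4)$ is invertible, yet $\|\theta_{z,u}\|=\|(2,2)\|_{\infty}=2$ while $\|z\|\,\|u\|=4$. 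Your ``consistency check'' does not repair this: it invokes the same identity for $\|\theta_{xc^{n},y}\|$, and it additionally needs $\lim_{n}\|xc^{n}\|^{1/n}=\|c\|$, which also fails in $\mathbb{C}\oplus\mathbb{C}$ (take $x=(1,0)$, $c=(1,2)$, so $\|xc^{n}\|\equiv 1$).

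The fact that your computation never uses $\langle x,y\rangle\in\mathbb{C}\cdot e$ and needs only $z\neq 0$ should have been the warning sign, and in fact the same example refutes the proposition itself, not merely the proof. In $\mathscr{A}=\mathscr{X}=\mathbb{C}\oplus\mathbb{C}$ put $z=x=(1,2)$ and $y=(2,1)$, so that $\langle z,z\rangle=(1,4)$ is invertible and $\langle x,y\rangle=(2,2)=2e$. Then $\|x+\lambda y\|=\max(|1+2\lambda|,|2+\lambda|)\le 3<4=\|x\|+\|y\|$ for every $\lambda\in\mathbb{T}$, so $x\nparallel y$; but $\theta_{z,x}$ and $\theta_{z,y}$ are multiplication by $(1,4)$ and $(2,2)$, whence $\|\theta_{z,x}+\theta_{z,y}\|=6=4+2=\|\theta_{z,x}\|+\|\theta_{z,y}\|$ and $\theta_{z,x}\parallel\theta_{z,y}$. (The paper's own argument is different in route --- it normalizes $z$ to $z\langle z,z\rangle^{-1/2}$ and then chains spectral-radius equivalences through Lemma \ref{lemma.280}(iv) --- but it founders on the same two reefs: the normalization changes $\theta_{z,x}$, $\theta_{z,y}$ in a way that does not preserve norm-parallelism, and it too invokes $\|\theta_{x,y}\|=\|x\|\,\|y\|$.) The gap is therefore not repairable under the stated hypotheses; if one strengthens the assumption to $\langle z,z\rangle\in\mathbb{C}\cdot e$, then $\|\theta_{z,u}\|=\|z\|\,\|u\|$ does hold for every $u$ and your cancellation argument goes through verbatim, without any condition on $\langle x,y\rangle$.
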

\begin{proof}
First notice that, by replacing $z$ with ${\langle z, z\rangle}^\frac{-1}{2}z$, we assume that $\langle z, z\rangle = e$. Further, since $\langle x, y\rangle\in\mathbb{C}\cdot e$ we get ${\theta}^2_{x,y} = \theta_{x\langle y, x\rangle,y} = \langle y, x\rangle\theta_{x,y},$ from which we obtain
${\theta}^n_{x,y} = {\langle y, x\rangle}^{n - 1}\theta_{x,y} \,\, (n\in\mathbb{N})$. It follows from $\lim_{n\rightarrow\infty} \|{\theta}^n_{x,y}\|^{\frac{1}{n}} = \lim_{n\rightarrow\infty} |{\langle y, x\rangle}|^{\frac{n - 1}{n}} \|\theta_{x,y}\|^{\frac{1}{n}} = |{\langle y, x\rangle}|$ that
\begin{align}\label{id.18.2}
r(\theta_{x,y}) = |{\langle y, x\rangle}|.
\end{align}
We have
\begin{align*}
x\parallel y & \Longleftrightarrow |{\langle y, x\rangle}| = \|y\|\,\|x\| \qquad\qquad \qquad \qquad(\mbox{by Theorem} \,\,\ref{th.18})
\\& \Longleftrightarrow r(\theta_{x,y}) = \|x\|\,\|y\|  \qquad\qquad \qquad\qquad (\mbox{by} \,\,\ref{id.18.2})
\\& \Longleftrightarrow r(\theta_{x\langle z, z\rangle,y}) = \|\theta_{x\langle z, z\rangle,y}\| = \|x\|\,\|y\|\qquad\quad \qquad \qquad \qquad (\mbox{since} \,\,\langle z, z\rangle = e)
\\& \Longleftrightarrow r(\theta_{x,z}\theta_{z,y}) = \|\theta_{x,z}\theta_{z,y}\| = \|z\|\,\|x\|\,\|z\|\,\|y\| \qquad  \qquad (\mbox{since} \,\,\|z\| = 1)
\\& \Longleftrightarrow r({\theta}^\ast_{z,x}\theta_{z,y}) = \|{\theta}^\ast_{z,x}\theta_{z,y}\| = \|\theta_{z,x}\|\,\|\theta_{z,y}\|
\\& \Longleftrightarrow \theta_{z,x}\parallel \theta_{z,y}.\qquad  \qquad \qquad (\mbox{by the equivalence}\,\, (i) \Leftrightarrow (iv) \,\mbox{of Lemma} \,\,\ref{lemma.280})
\end{align*}
\end{proof}
Recall that the numerical range of an arbitrary element $a\in{\mathscr A}$ is defined by $W_0(a) = \{\varphi(a):\,\varphi \,\,\mbox{is a state on}\,\, {\mathscr A}\}$. $W_0(a)$ is convex, closed and contains the spectrum $\sigma(a)$. Furthermore, if $a$ is normal, then $W_0(a)$ is the convex hull of the spectrum of $a$ (see \cite[Theorems 1, 8]{S.W}). By $\mbox{Re}(a) = \frac{a + a^*}{2}$ we shall denote the real part of $a$.
Before the next theorem we shall first recall the following result.
\begin{lemma} \cite[Corollary 4.2]{M.Z}\label{le.21}
Let $x , y\in \mathscr{X}\smallsetminus\{0\}$.
\begin{itemize}
\item[(i)] If $x\parallel y$, then there exist a state $\varphi$ over $\mathscr A$ and $\lambda\in\mathbb{T}$ such that $$\frac{\|y\|}{\|x\|}\varphi(|x|^2)+\frac{\|x\|}{\|y\|}\varphi(|y|^2)=2\lambda\varphi({\langle x, y\rangle}).$$
\item[(ii)] Let ${\mathscr A}$ has an identity $e$. If either $|x|^2=e$ or $|y|^2=e$ and there exist a state $\varphi$ over $\mathscr A$ and $\lambda\in\mathbb{T}$ such that $\frac{\|y\|}{\|x\|}\varphi(|x|^2)+\frac{\|x\|}{\|y\|}\varphi(|y|^2)=2\lambda\varphi({\langle x, y\rangle})$, then $x\parallel y$.
\end{itemize}
\end{lemma}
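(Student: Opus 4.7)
My plan for \emph{part (i)} is to reduce everything to the fact that every positive element of a $C^*$-algebra attains its norm at some state. Starting from the hypothesis $\|x+\lambda y\|=\|x\|+\|y\|$ for some $\lambda\in\mathbb{T}$, the positive element $\langle x+\lambda y,\,x+\lambda y\rangle$ has norm $(\|x\|+\|y\|)^2$, so by \cite[Theorem 3.3.6]{mor} there is a state $\varphi$ over $\mathscr A$ with $\varphi(\langle x+\lambda y,\,x+\lambda y\rangle)=(\|x\|+\|y\|)^2$. Expanding the inner product as $|x|^2+2\mbox{Re}(\lambda\langle x,y\rangle)+|y|^2$ and applying $\varphi$, the resulting sum is compared with the three trivial upper bounds $\varphi(|x|^2)\le\|x\|^2$, $\varphi(|y|^2)\le\|y\|^2$ and $|\varphi(\langle x,y\rangle)|\le\|x\|\|y\|$. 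Since their sum is already $(\|x\|+\|y\|)^2$, each summand must attain its bound: in particular $\varphi(|x|^2)=\|x\|^2$, $\varphi(|y|^2)=\|y\|^2$, and $\lambda\varphi(\langle x,y\rangle)=\|x\|\|y\|$. Substituting these three values into the target identity gives both sides equal to $2\|x\|\|y\|$.

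For \emph{part (ii)}, I would assume without loss of generality that $|x|^2=e$ (the other case is symmetric), so $\|x\|=1$ and $\varphi(|x|^2)=\varphi(e)=1$. Clearing the fraction in the hypothesis yields
$$\|y\|^2+\varphi(|y|^2)=2\lambda\|y\|\varphi(\langle x,y\rangle).$$
The left-hand side is real and non-negative, forcing $\lambda\varphi(\langle x,y\rangle)$ to be real and non-negative as well. Applying the Cauchy--Schwarz inequality for the semi-inner product $(u,v)\mapsto\varphi(\langle u,v\rangle)$ gives $|\varphi(\langle x,y\rangle)|^2\le\varphi(|x|^2)\varphi(|y|^2)=\varphi(|y|^2)$, hence
$$\|y\|^2+\varphi(|y|^2)\;\le\;2\|y\|\,|\varphi(\langle x,y\rangle)|\;\le\;2\|y\|\sqrt{\varphi(|y|^2)},$$
i.e., $\bigl(\|y\|-\sqrt{\varphi(|y|^2)}\bigr)^{2}\le 0$. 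This forces $\varphi(|y|^2)=\|y\|^2$, and substituting back yields $|\varphi(\langle x,y\rangle)|=\|y\|=\|x\|\|y\|$. The implication $(ii)\Rightarrow(i)$ of Lemma \ref{lemma.280} then delivers $x\parallel y$.

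The delicate step in part (i) is a rigidity observation: a sum of three quantities bounded above by known constants can match the sum of those constants only when each term is individually extremal; this is where the three equalities $\varphi(|x|^2)=\|x\|^2$, $\varphi(|y|^2)=\|y\|^2$, $\lambda\varphi(\langle x,y\rangle)=\|x\|\|y\|$ simultaneously drop out. Part (ii) instead hinges on the fact that the hypothesis $|x|^2=e$ is exactly what eliminates $\varphi(|x|^2)$ from the Cauchy--Schwarz estimate and produces the perfect square $(\|y\|-\sqrt{\varphi(|y|^2)})^{2}$. I do not anticipate a serious obstacle beyond these two observations; the main risk is overlooking that $\lambda\varphi(\langle x,y\rangle)$ must actually be a non-negative real in part (ii), a fact that is used implicitly when replacing $\lambda\varphi(\langle x,y\rangle)$ with $|\varphi(\langle x,y\rangle)|$ in the chain of inequalities.
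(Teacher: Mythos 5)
Your argument is correct: the rigidity step in part (i) (three terms each bounded by constants whose sum is attained, forcing each equality, whence $\lambda\varphi(\langle x,y\rangle)=\|x\|\|y\|$) and the perfect-square/Cauchy--Schwarz step in part (ii) both check out, as does the symmetric case $|y|^2=e$. Note that the paper gives no proof of this lemma --- it is imported verbatim from \cite[Corollary 4.2]{M.Z} --- so there is nothing in-paper to compare against; your derivation via a norm-attaining state for the positive element $\langle x+\lambda y, x+\lambda y\rangle$ and the state Cauchy--Schwarz inequality is the natural self-contained route and uses exactly the tools the paper deploys elsewhere (e.g.\ in Theorems \ref{th.18} and \ref{th.22}).
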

\begin{theorem}\label{th.22}
Let ${\mathscr A}$ be a unital $C^*$-algebra with the identity $e$ and $x , y\in \mathscr{X}$ such that $|y|^2 = e$. The following statements are equivalent:
\begin{itemize}
\item[(i)] $x\parallel y$.
\item[(ii)] $|x|^2-2\|x\|\mbox{Re}(\langle x, \lambda y\rangle) + \|x\|^2e $ is non-invertible for some $\lambda\in\mathbb{T}$.
\end{itemize}
\end{theorem}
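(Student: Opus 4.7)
The plan is to exploit the positivity hidden in the expression from (ii), convert non-invertibility into the existence of a state that annihilates it, and then bridge to the state-theoretic characterization of parallelism given in Lemma~\ref{lemma.280}. The hypothesis $|y|^2=e$ is used in two places: it forces $\|y\|=1$, and it produces a clean perfect-square identity.

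The key identification is that, since $\langle y,y\rangle = e$ and $\lambda\in\mathbb{T}$, a direct expansion (using conjugate-linearity in the first slot of $\langle\cdot,\cdot\rangle$) gives
\begin{equation*}
A_\lambda \;:=\; |x|^2 - 2\|x\|\,\mbox{Re}(\langle x,\lambda y\rangle) + \|x\|^2 e \;=\; \langle x-\|x\|\bar\lambda y,\; x-\|x\|\bar\lambda y\rangle,
\end{equation*}
so $A_\lambda\geq 0$. Because the numerical range of a self-adjoint element coincides with the convex hull of its spectrum, $A_\lambda$ is non-invertible if and only if $0\in\sigma(A_\lambda)$, which in turn is equivalent to the existence of a state $\varphi$ on $\mathscr{A}$ with $\varphi(A_\lambda)=0$. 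This reformulation is the whole point of the identification.

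For (i)$\Rightarrow$(ii), I would feed $x\parallel y$ into Lemma~\ref{le.21}(i) and specialize $\|y\|=1$ and $|y|^2=e$. The resulting identity has a real left-hand side, so the scalar $\lambda\varphi(\langle x,y\rangle)$ it produces is automatically real and equals $\varphi(\mbox{Re}(\langle x,\lambda y\rangle))$; after multiplication by $\|x\|$ the identity collapses to $\varphi(A_\lambda)=0$, which is (ii) by the equivalence above.

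The real work is (ii)$\Rightarrow$(i). Given a state $\varphi$ with $\varphi(A_\lambda)=0$, one reads off $\varphi(|x|^2)+\|x\|^2 = 2\|x\|\,\mbox{Re}(\lambda\varphi(\langle x,y\rangle))$. Chaining the modular Cauchy--Schwarz inequality $|\varphi(\langle x,y\rangle)|^2\leq\varphi(|x|^2)\varphi(|y|^2)=\varphi(|x|^2)$ with AM--GM yields the two-sided squeeze
\begin{equation*}
2\|x\|\,\mbox{Re}(\lambda\varphi(\langle x,y\rangle)) \;\leq\; 2\|x\|\,|\varphi(\langle x,y\rangle)| \;\leq\; 2\|x\|\sqrt{\varphi(|x|^2)} \;\leq\; \varphi(|x|^2)+\|x\|^2,
\end{equation*}
which must therefore be a chain of equalities. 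The modulus/reality equality together with equality in Cauchy--Schwarz forces $|\varphi(\langle x,y\rangle)| = \|x\| = \|x\|\,\|y\|$, and the implication (ii)$\Rightarrow$(i) of Lemma~\ref{lemma.280} then delivers $x\parallel y$. The trivial case $x=0$ is disposed of separately (both (i) and (ii) hold). The one place where I expect to have to argue carefully is the last step: positivity of $A_\lambda$ is \emph{precisely} what tightens the AM--GM / Cauchy--Schwarz chain into the norm-attaining state required by Lemma~\ref{lemma.280}, and dropping $|y|^2=e$ breaks the clean squaring and the reduction fails.
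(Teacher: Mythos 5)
Your proof is correct and follows essentially the same route as the paper: the same use of Lemma~\ref{le.21}(i) plus the positivity of $|x|^2-2\|x\|\mathrm{Re}(\langle x,\lambda y\rangle)+\|x\|^2e$ (which you make explicit as a perfect square $\langle x-\mu y, x-\mu y\rangle$, a point the paper only uses implicitly) to pass between non-invertibility, $0\in\sigma$, and an annihilating state, and the same Cauchy--Schwarz/AM--GM equality chain for the converse. The only cosmetic difference is at the very end of (ii)$\Rightarrow$(i), where you extract $|\varphi(\langle x,y\rangle)|=\|x\|\,\|y\|$ and invoke Lemma~\ref{lemma.280}(ii) directly, whereas the paper feeds the identity back into Lemma~\ref{le.21}(ii); both are valid.
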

\begin{proof}
(i)$\Rightarrow$(ii) Let $x\parallel y$. We may assume that $x\neq 0$. Then, by Lemma \ref{le.21} (i), there exist a state $\varphi$ over $\mathscr A$ and $\lambda\in\mathbb{T}$ such that
\begin{align*}
\frac{1}{\|x\|}\varphi(|x|^2)+\|x\|\varphi(e) = \frac{\|y\|}{\|x\|}\varphi(|x|^2)+\frac{\|x\|}{\|y\|}\varphi(|y|^2)=2\lambda\varphi({\langle x, y\rangle}).
\end{align*}
Hence
$$\varphi\Big(|x|^2-2\|x\|\mbox{Re}(\langle x, \lambda y\rangle) + \|x\|^2e \Big) = 0.$$
So, $W_0\Big(|x|^2-2\|x\|\mbox{Re}(\langle x, \lambda y\rangle) + \|x\|^2e\Big)$ is the line segment connecting $0$ and $\Big\||x|^2-2\|x\|\mbox{Re}(\langle x, \lambda y\rangle) + \|x\|^2e\Big\|$.
Furthermore, by \cite[Theorems 1, 8]{S.W} $W_0\Big(|x|^2-2\|x\|\mbox{Re}(\langle x, \lambda y\rangle) + \|x\|^2e\Big)$ is the convex hull of the spectrum of $|x|^2-2\|x\|\mbox{Re}(\langle x, \lambda y\rangle) + \|x\|^2e$. Hence we get $0\in \sigma\Big(|x|^2-2\|x\|\mbox{Re}(\langle x, \lambda y\rangle) + \|x\|^2e \Big)$. So, $|x|^2-2\|x\|\mbox{Re}(\langle x, \lambda y\rangle) + \|x\|^2e $ is non-invertible.\\
(ii)$\Rightarrow$(i) Suppose that (ii) holds. We have
$$0\in \sigma\Big(|x|^2-2\|x\|\mbox{Re}(\langle x, \lambda y\rangle) + \|x\|^2e \Big) \subseteq W_0\Big(|x|^2-2\|x\|\mbox{Re}(\langle x, \lambda y\rangle) + \|x\|^2e \Big).$$
Hence there exists a state $\varphi$ over $\mathscr A$ such that
\begin{align}\label{id.25}
\varphi\Big(|x|^2-2\|x\|\mbox{Re}(\langle x, \lambda y\rangle) + \|x\|^2e \Big) = 0.
\end{align}
Thus
\begin{align}\label{id.26}
\frac{\|y\|}{\|x\|}\varphi(|x|^2)+\frac{\|x\|}{\|y\|}\varphi(|y|^2)= \frac{1}{\|x\|}\varphi(|x|^2)+\|x\|\varphi(e) = 2\varphi\Big(\mbox{Re}(\langle x, \lambda y\rangle)\Big).
\end{align}
Therefore
\begin{align*}
0\leq\Big(\sqrt{\varphi(|x|^2)}-\|x\|\Big)^2& = \varphi(|x|^2)-2\|x\|\sqrt{\varphi(|x|^2)} + \|x\|^2
\\& = \varphi(|x|^2)-2\|x\|\sqrt{\varphi(\langle x, x\rangle)\varphi(\langle \lambda y, \lambda y\rangle)} + \|x\|^2
\\& \hspace{5cm}(\mbox{since}\,\,\varphi(\langle \lambda y, \lambda y\rangle)= \varphi(e)=1)
\\&\leq \varphi(|x|^2)-2\|x\|\Big|\varphi(\langle x, \lambda y\rangle)\Big| + \|x\|^2
\\&\hspace{4cm}(\mbox{by the Cauchy--Schwarz inequality})
\\& \leq \varphi(|x|^2)-2\|x\|\mbox{Re}\Big(\varphi(\langle x, \lambda y\rangle)\Big) + \|x\|^2
\\&= \varphi(|x|^2)-2\|x\|\varphi\Big(\mbox{Re}(\langle x, \lambda y\rangle)\Big) + \|x\|^2\varphi(e)
\\&= \varphi\Big(|x|^2-2\|x\|\mbox{Re}(\langle x, \lambda y\rangle) + \|x\|^2e \Big) = 0  \qquad\qquad(\mbox{by}\,\,(\ref{id.25})).
\end{align*}
We conclude that $\varphi\Big(\mbox{Re}(\langle x, \lambda y\rangle)\Big) = \Big|\varphi(\langle x, \lambda y\rangle)\Big| = \varphi(\langle x, \lambda y\rangle)$, since $\varphi\Big(\mbox{Re}(\langle x, \lambda y\rangle)\Big)\geq 0$. It follows from (\ref{id.26}) that
$$\frac{\|y\|}{\|x\|}\varphi(|x|^2)+\frac{\|x\|}{\|y\|}\varphi(|y|^2) = 2\varphi(\langle x, \lambda y\rangle) = 2\lambda \varphi(\langle x, y\rangle),$$
whence, by Lemma \ref{le.21} (ii), $x\parallel y$.
\end{proof}
\begin{corollary}\label{co.27}
Let ${\mathscr A}$ be a unital commutative $C^*$-algebra with the identity $e$ and $a , b\in \mathscr{X}$ such that $|b|^2 = e$. The following statements are equivalent:
\begin{itemize}
\item[(i)] $a\parallel b$.
\item[(ii)] $a-\lambda\|a\|b$ is non-invertible for some $\lambda\in\mathbb{T}$.
\end{itemize}
\end{corollary}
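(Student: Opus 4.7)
The plan is to reduce the corollary directly to Theorem \ref{th.22} via an algebraic factorization that is made available by commutativity. I read ``$a,b\in\mathscr{X}$'' as the natural case $\mathscr{X}=\mathscr{A}$ with $\langle a,b\rangle = a^{*}b$, which is the only setting in which the expression $a-\lambda\|a\|b$ can reasonably be called invertible. Theorem \ref{th.22} then says that (i) is equivalent to
\[
c_\lambda := |a|^2 - 2\|a\|\,\mbox{Re}(\langle a,\lambda b\rangle) + \|a\|^2 e
\]
being non-invertible for some $\lambda\in\mathbb{T}$. So the entire task is to show that $c_\lambda$ is non-invertible if and only if $a-\lambda\|a\|b$ is non-invertible.

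First I would expand the middle term as $\lambda\|a\|\,a^{*}b + \overline{\lambda}\|a\|\,b^{*}a$ and rewrite $\|a\|^2 e$ as $\|a\|^2 b^{*}b$ using the hypothesis $|b|^2=e$. This gives the factorization
\[
c_\lambda = a^{*}a - \lambda\|a\|\,a^{*}b - \overline{\lambda}\|a\|\,b^{*}a + \|a\|^2 b^{*}b = (a-\lambda\|a\|b)^{*}(a-\lambda\|a\|b) = |a-\lambda\|a\|b|^{2},
\]
which is valid in any $C^{*}$-algebra; commutativity is not yet used.

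Next I would invoke the commutativity of $\mathscr{A}$: by Gelfand--Naimark $\mathscr{A}\cong C(X)$ for a compact Hausdorff space $X$, and a function in $C(X)$ is invertible iff it vanishes nowhere, which is the same condition as its modulus squared being invertible. Applying this to $d=a-\lambda\|a\|b$ yields $|d|^{2}$ non-invertible iff $d$ non-invertible. Combined with the factorization, this gives the equivalence (i)$\Leftrightarrow$(ii).

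The only subtle point, which is the main conceptual obstacle, is recognizing where commutativity is essential. The factorization $c_\lambda = |a-\lambda\|a\|b|^{2}$ holds in any unital $C^{*}$-algebra, but only in the commutative setting does non-invertibility of $|d|^{2}$ automatically entail non-invertibility of $d$ (in a non-commutative algebra it would only say that $d$ is not left-invertible). Thus the proof is essentially one line of algebra plus one appeal to Gelfand once Theorem \ref{th.22} is granted.
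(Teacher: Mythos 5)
Your proof is correct and follows essentially the same route as the paper: both reduce to Theorem \ref{th.22} by factoring $|a|^2-2\|a\|\,\mathrm{Re}(\langle a,\lambda b\rangle)+\|a\|^2e$ as $(a-\lambda\|a\|b)^*(a-\lambda\|a\|b)$ and then using commutativity to identify non-invertibility of $|d|^2$ with that of $d=a-\lambda\|a\|b$. One small remark on your closing commentary: the implication ``$|d|^2$ non-invertible $\Rightarrow$ $d$ non-invertible'' in fact holds in every unital $C^*$-algebra (invertibility of $d$ forces invertibility of $d^*d$); it is the converse, ``$d$ non-invertible $\Rightarrow$ $|d|^2$ non-invertible,'' that genuinely requires commutativity (the unilateral shift shows it can fail otherwise) --- but since your Gelfand argument establishes the full two-way equivalence, the proof itself is unaffected.
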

\begin{proof}
Since ${\mathscr A}$ is commutative, $|a|^2-2\|a\|\mbox{Re}(\langle a, \lambda b\rangle) + \|a\|^2e = (a-\lambda\|a\|b)^*(a- \lambda\|a\|b)$ is non-invertible if and only if $a-\lambda\|a\|b$ is non-invertible. Therefore, the statement follows from Theorem \ref{th.22}.
\end{proof}

As a consequence of Theorem \ref{th.05}, we have the following characterization of the norm-parallelism for elements of $\mathbb{B}(\mathscr{H})$.
\begin{corollary}\label{cr.055}
For $T, S\in\mathbb{B}(\mathscr{H})$ the following statements are equivalent:
\begin{itemize}
\item[(i)] $T\parallel S$.
\item[(ii)] There exists a sequence of unit vectors $\{\xi_n\}$ in $\mathscr{H}$ such that $$\lim_{n\rightarrow\infty} |[T\xi_n, S\xi_n]| = \|T\|\,\|S\|.$$
\end{itemize}
In addition, if $\{\xi_n\}$ is a sequence of unit vectors in $\mathscr{H}$ satisfying (ii), then it also satisfies
$$\lim_{n\rightarrow\infty} \|T\xi_n\| = \|T\| \quad \mbox{and} \quad \lim_{n\rightarrow\infty} \|S\xi_n\| = \|S\|.$$
\end{corollary}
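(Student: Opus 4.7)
The plan is to specialize Theorem \ref{th.05} to the situation $\mathscr{X}=\mathscr{A}=\mathbb{B}(\mathscr{H})$, viewed as a Hilbert $C^*$-module over itself with inner product $\langle T,S\rangle=T^{*}S$. In that setting the identity representation of $\mathbb{B}(\mathscr{H})$ on $\mathscr{H}$ is a faithful $*$-representation for which $\|a\|=\sup\{[a\xi,\xi]:\|\xi\|=1\}$ on positive elements, so Theorem \ref{th.05} applies with the underlying Hilbert space $\mathscr{H}$ itself playing the role of the auxiliary Hilbert space. The condition $\lim_{n\to\infty}\mbox{Re}[\langle T,\lambda S\rangle\xi_n,\xi_n]=\|T\|\,\|S\|$ rewrites, using $\langle T,\lambda S\rangle=\lambda T^{*}S$ and $[T^{*}S\xi_n,\xi_n]=[S\xi_n,T\xi_n]=\overline{[T\xi_n,S\xi_n]}$, as
\[
\lim_{n\to\infty}\mbox{Re}\bigl(\overline{\lambda}\,[T\xi_n,S\xi_n]\bigr)=\|T\|\,\|S\|.
\]

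For (i)$\Rightarrow$(ii), I would invoke Theorem \ref{th.05} to produce $\{\xi_n\}$ and $\lambda\in\mathbb{T}$ with the displayed identity above, and then sandwich
\[
\mbox{Re}\bigl(\overline{\lambda}\,[T\xi_n,S\xi_n]\bigr)\leq\bigl|[T\xi_n,S\xi_n]\bigr|\leq\|T\xi_n\|\,\|S\xi_n\|\leq\|T\|\,\|S\|,
\]
so the squeeze forces $|[T\xi_n,S\xi_n]|\to\|T\|\,\|S\|$. For (ii)$\Rightarrow$(i) I may assume $T,S\neq0$ (the other case is trivial for norm-parallelism). Writing $[T\xi_n,S\xi_n]=r_n e^{i\theta_n}$ with $r_n\to\|T\|\,\|S\|>0$ and passing to a subsequence, $e^{i\theta_n}\to\mu$ for some $\mu\in\mathbb{T}$. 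Setting $\lambda=\mu$ gives
\[
\mbox{Re}\bigl(\overline{\lambda}\,[T\xi_n,S\xi_n]\bigr)=\mbox{Re}(\overline{\mu}\,r_n e^{i\theta_n})\longrightarrow\|T\|\,\|S\|,
\]
which by Theorem \ref{th.05} yields $T\parallel S$.

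For the final assertion about $\|T\xi_n\|$ and $\|S\xi_n\|$, again assume $T,S\neq0$. The inequality chain $|[T\xi_n,S\xi_n]|\leq\|T\xi_n\|\,\|S\xi_n\|\leq\|T\|\,\|S\|$ together with the hypothesis (ii) gives $\|T\xi_n\|\,\|S\xi_n\|\to\|T\|\,\|S\|$. Since $\|T\xi_n\|\leq\|T\|$ and $\|S\xi_n\|\leq\|S\|$, the estimate
\[
\|T\|\,\|S\|=\lim_{n\to\infty}\|T\xi_n\|\,\|S\xi_n\|\leq\limsup_{n\to\infty}\|T\xi_n\|\cdot\|S\|
\]
forces $\limsup\|T\xi_n\|\geq\|T\|$, hence $\|T\xi_n\|\to\|T\|$; symmetrically $\|S\xi_n\|\to\|S\|$.

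No step is genuinely hard; the only care needed is keeping track of complex conjugates when converting between $\mbox{Re}[\langle T,\lambda S\rangle\xi_n,\xi_n]$ and $|[T\xi_n,S\xi_n]|$, and handling the trivial zero cases so that the phase $\mu$ in the converse direction is well-defined. The subsequence passage in (ii)$\Rightarrow$(i) is legitimate because it suffices to produce any single sequence witnessing the Theorem \ref{th.05} condition.
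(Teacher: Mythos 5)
Your argument is correct and follows the paper's route essentially verbatim: both directions reduce to Theorem \ref{th.05} via the identification $\langle T,S\rangle=T^{*}S$ with the identity representation, the phase $\lambda$ in (ii)$\Rightarrow$(i) is extracted by compactness of $\mathbb{T}$ (a step the paper leaves implicit), and the norm-attainment claims come from the same sandwich $|[T\xi_n,S\xi_n]|\leq\|T\xi_n\|\,\|S\xi_n\|\leq\|T\|\,\|S\|$ that the paper uses inside its proof of (ii)$\Rightarrow$(i). The one blemish is in your last paragraph: $\limsup_{n}\|T\xi_n\|\geq\|T\|$ alone does not yield $\|T\xi_n\|\to\|T\|$; instead take $\liminf$ in $\|T\xi_n\|\,\|S\xi_n\|\leq\|T\xi_n\|\,\|S\|$ to get $\liminf_{n}\|T\xi_n\|\geq\|T\|$, which combined with $\|T\xi_n\|\leq\|T\|$ gives convergence.
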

\begin{proof}
(i)$\Rightarrow$(ii) This implication follows immediately from Theorem \ref{th.05}.\\
(ii)$\Rightarrow$(i)
Suppose that there exist a sequence of unit vectors $\{\xi_n\}$ in $\mathscr{H}$ and $\lambda\in\mathbb{T}$ such that $\lim_{n\rightarrow\infty} [T\xi_n, S\xi_n] = \lambda \|T\|\,\|S\|$. It follows from
$$\|T\|\,\|S\| = \lim_{n\rightarrow\infty} |[T\xi_n, S\xi_n]|\leq \lim_{n\rightarrow\infty} \|T\xi_n\|\,\|S\|\leq\|T\|\,\|S\|,$$
that $\lim_{n\rightarrow\infty} \|T\xi_n\|=\|T\|$ and by using a similar argument, $\lim_{n\rightarrow\infty} \|S\xi_n\|=\|S\|$.
Hence
$$\lim_{n\rightarrow\infty} \mbox{Re}[T\xi_n, \lambda S\xi_n] = \lim_{n\rightarrow\infty} [T\xi_n, \lambda S\xi_n] = \|T\|\,\|S\|.$$
Thus, by Theorem \ref{th.05}, we get
$T\parallel S$.
\end{proof}
Next we obtain some characterizations of the norm-parallelism for elements of $\mathbb{B}(\mathscr{H})$ defined on a finite dimensional Hilbert space.
\begin{theorem}\label{th.17.5}
Let $\mathscr{H}$ be a finite dimensional Hilbert space and $T, S\in\mathbb{B}(\mathscr{H})$. The following statements are equivalent:
\begin{itemize}
\item[(i)] $T\parallel S$.
\item[(ii)] There exists a unit vector $\xi\in\mathscr{H}$ such that $|[T\xi, S\xi]| = \|T\|\,\|S\|$.
\item[(iii)] There exists a unit vector $\xi\in\mathscr{H}$ such that $\|T\xi\| = \|T\|$, $\|S\xi\| = \|S\|$ and $T\xi\parallel S\xi$.
\end{itemize}
\end{theorem}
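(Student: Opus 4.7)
The plan is to leverage Corollary \ref{cr.055} together with finite-dimensional compactness, and to extract the extra structure via the equality case of the Cauchy--Schwarz inequality. The three implications I will prove in a cycle $(i)\Rightarrow(ii)\Rightarrow(iii)\Rightarrow(i)$.

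For $(i)\Rightarrow(ii)$, I would apply Corollary \ref{cr.055} to produce a sequence of unit vectors $\{\xi_n\}\subset\mathscr{H}$ with $\lim_{n\to\infty}|[T\xi_n,S\xi_n]|=\|T\|\,\|S\|$. Because $\mathscr{H}$ is finite dimensional, the unit sphere is compact, so after passing to a subsequence we have $\xi_n\to\xi$ for some unit vector $\xi$. Continuity of $T$, $S$ and the inner product then yields $|[T\xi,S\xi]|=\|T\|\,\|S\|$. This compactness step is really the only place where the finite-dimensional hypothesis enters, and it is the only "non-routine" move in the argument.

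For $(ii)\Rightarrow(iii)$, assume $|[T\xi,S\xi]|=\|T\|\,\|S\|$ for some unit vector $\xi$. The classical Cauchy--Schwarz inequality in $\mathscr{H}$ gives
\[
\|T\|\,\|S\|=|[T\xi,S\xi]|\le\|T\xi\|\,\|S\xi\|\le\|T\|\,\|S\|,
\]
so every inequality must be an equality. This forces $\|T\xi\|=\|T\|$ and $\|S\xi\|=\|S\|$, and moreover equality $|[T\xi,S\xi]|=\|T\xi\|\,\|S\xi\|$ forces $T\xi$ and $S\xi$ to be linearly dependent, which is exactly the norm-parallelism $T\xi\parallel S\xi$ in the Hilbert space $\mathscr{H}$.

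For $(iii)\Rightarrow(i)$, norm-parallelism of $T\xi$ and $S\xi$ supplies some $\lambda\in\mathbb{T}$ with $\|T\xi+\lambda S\xi\|=\|T\xi\|+\|S\xi\|=\|T\|+\|S\|$. Therefore
\[
\|T+\lambda S\|\ge \|(T+\lambda S)\xi\|=\|T\xi+\lambda S\xi\|=\|T\|+\|S\|,
\]
and the triangle inequality provides the reverse estimate, giving $\|T+\lambda S\|=\|T\|+\|S\|$, that is $T\parallel S$. The whole proof is short; the only genuine obstruction is the extraction of a limit point $\xi$ in step one, which is precisely where the restriction to finite dimension is used.
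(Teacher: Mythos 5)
Your proposal is correct and takes essentially the same route as the paper: compactness of the unit sphere in finite dimensions for $(i)\Rightarrow(ii)$, the equality case of the Cauchy--Schwarz inequality for $(ii)\Rightarrow(iii)$, and the evaluation estimate $\|T+\lambda S\|\ge\|(T+\lambda S)\xi\|$ for $(iii)\Rightarrow(i)$. The only (harmless, indeed streamlining) difference is that you invoke Corollary \ref{cr.055} instead of Theorem \ref{th.05}, which lets you skip the paper's trigonometric computation converting $\mbox{Re}[\langle T,\lambda S\rangle\xi,\xi]=\|T\|\,\|S\|$ into $|[T\xi,S\xi]|=\|T\|\,\|S\|$.
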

\begin{proof}
(i)$\Rightarrow$(ii) Let $T\parallel S$. By Theorem \ref{th.05}, there exist a sequence of unit vectors $\{\xi_n\}$ in $\mathscr{H}$ and $\lambda\in\mathbb{T}$ such that
$$\lim_{n\rightarrow\infty} \mbox{Re}[\langle T, \lambda S\rangle\xi_n, \xi_n]= \|T\|\,\|S\|.$$
Since $\{\xi_n\}$ is a bounded sequence, it has a convergent subsequence converging to a unit vector $\xi$. Thus we obtain
\begin{align}\label{id.17.51}
\mbox{Re}[\langle T, \lambda S\rangle\xi, \xi]= \|T\|\,\|S\|.
\end{align}
Now, let $\lambda = \sin\theta + i\cos\theta$ for some $\theta\in [0, \pi]$. Then from (\ref{id.17.51}) we get
\begin{align*}
\|T\|\,\|S\| &= \mbox{Re}[\langle T, \lambda S\rangle\xi, \xi] = \mbox{Re}[\lambda T^*S\xi, \xi]
\\& = \mbox{Re}\Big((\sin\theta + i\cos\theta)[S\xi, T\xi]\Big)
\\& = \mbox{Re}\Big((\sin\theta + i\cos\theta)(\mbox{Re}[S\xi, T\xi] + i \,\mbox{Im}[S\xi, T\xi])\Big)
\\& = \mbox{Re}\Big(\mbox{Re}[S\xi, T\xi]\sin\theta - \mbox{Im}[S\xi, T\xi]\cos\theta + i\,(\mbox{Im}[S\xi, T\xi]\sin\theta + \mbox{Re}[S\xi, T\xi]\cos\theta)\Big)
\\& = \mbox{Re}[S\xi, T\xi]\sin\theta - \mbox{Im}[S\xi, T\xi]\cos\theta.
\end{align*}
Thus $\|T\|\,\|S\| = \mbox{Re}[S\xi, T\xi]\sin\theta - \mbox{Im}[S\xi, T\xi]\cos\theta$.
We have
\begin{align*}
\|T\|\,\|S\| &= \Big|\mbox{Re}[S\xi, T\xi]\sin\theta - \mbox{Im}[S\xi, T\xi]\cos\theta\Big|
\\& \leq \sqrt{\mbox{Re}^2[S\xi, T\xi] + \mbox{Im}^2[S\xi, T\xi]} = |[S\xi, T\xi]|\leq \|S\xi\|\,\|T\xi\|\leq \|T\|\,\|S\|.
\end{align*}
Thus $|[T\xi, S\xi]| = |[S\xi, T\xi]| = \|T\|\,\|S\|$.\\
(ii)$\Rightarrow$(iii) Suppose that (ii) holds. Therefore we have
$$\|T\|\,\|S\| = |[T\xi, S\xi]|\leq \|T\xi\|\,\|S\xi\|\leq \|T\|\,\|S\xi\|\leq \|T\|\,\|S\|.$$
This gives
$$\|T\xi\| = \|T\| \quad \mbox{and} \quad \|S\xi\| = \|S\|.$$
Hence we have
$$|[T\xi, S\xi]| = \|T\|\,\|S\| = \|T\xi\|\,\|S\xi\|,$$
i.e., $T\xi$ and $S\xi$ are linearly dependent. Thus $T\xi\parallel S\xi$.\\
(iii)$\Rightarrow$(i) Let there exist a unit vector $\xi\in\mathscr{H}$ such that $\|T\xi\| = \|T\|$, $\|S\xi\| = \|S\|$ and $T\xi\parallel S\xi$. Therefore there exists $\lambda\in\mathbb{T}$ such that
$$\|T\xi + \lambda S\xi\| = \|T\xi\| + \|S\xi\|.$$
So, we get
$$\|T\|+\|S\| = \|T\xi\| + \|S\xi\| = \|T\xi + \lambda S\xi\| \leq \|T + \lambda S\| \leq \|T\| + \|S\|.$$
Thus $\|T + \lambda S\| = \|T\| + \|S\|$, so $T\parallel S$.
\end{proof}
\begin{corollary}\label{cr.17.6}
Let $\mathscr{H}$ be finite dimensional, $T\in\mathbb{B}(\mathscr{H})$ be a nonzero positive operator and $S\in\mathbb{B}(\mathscr{H})$ be an arbitrary operator. Then the following statements are equivalent:
\begin{itemize}
\item[(i)] $T\parallel S$.
\item[(ii)] There exists a unit vector $\xi\in\mathscr{H}$ such that $T\xi= \|T\|\xi$ and $|[S\xi, \xi]| = \|S\|$.
\end{itemize}
\end{corollary}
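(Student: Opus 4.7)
The plan is to derive this corollary directly from Theorem \ref{th.17.5}, exploiting two structural facts about positive operators on a finite-dimensional Hilbert space. First, if $T\geq 0$ and $\xi$ is a unit vector with $\|T\xi\|=\|T\|$, then automatically $T\xi=\|T\|\xi$. This follows from the spectral decomposition $T=\sum_i\lambda_i P_i$ with $\lambda_i\geq 0$: writing $\xi=\sum_i\xi_i$ with $\xi_i\in\mathrm{range}(P_i)$, the equality $\sum_i\lambda_i^2\|\xi_i\|^2 = \|T\|^2 = \|T\|^2\sum_i\|\xi_i\|^2$ forces $\xi_i=0$ whenever $\lambda_i<\|T\|$, so $\xi$ lies in the top eigenspace. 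Second, parallelism of two vectors in a Hilbert space coincides with linear dependence.

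For (i)$\Rightarrow$(ii), I would apply the equivalence (i)$\Leftrightarrow$(iii) of Theorem \ref{th.17.5} to obtain a unit vector $\xi$ with $\|T\xi\|=\|T\|$, $\|S\xi\|=\|S\|$, and $T\xi\parallel S\xi$. Positivity of $T$ upgrades the first equality to $T\xi=\|T\|\xi$, which is a \emph{nonzero} scalar multiple of $\xi$ since $T\neq 0$. The parallelism $T\xi\parallel S\xi$ then forces $S\xi=\mu\xi$ for some $\mu\in\mathbb{C}$, and hence $|[S\xi,\xi]|=|\mu|=\|S\xi\|=\|S\|$, proving (ii).

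For (ii)$\Rightarrow$(i), start from $T\xi=\|T\|\xi$ and $|[S\xi,\xi]|=\|S\|$ (the case $S=0$ being trivial). The Cauchy--Schwarz inequality gives the chain
\[
\|S\| \;=\; |[S\xi,\xi]| \;\leq\; \|S\xi\| \;\leq\; \|S\|,
\]
so equality holds throughout; the equality case of Cauchy--Schwarz then yields $S\xi=\mu\xi$ for some scalar $\mu$ with $|\mu|=\|S\|$. Consequently $[T\xi,S\xi]=\|T\|\overline{\mu}$ and $|[T\xi,S\xi]|=\|T\|\,\|S\|$, and the equivalence (i)$\Leftrightarrow$(ii) of Theorem \ref{th.17.5} concludes $T\parallel S$.

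The corollary is a mechanical specialization of Theorem \ref{th.17.5}; the only real content is the spectral observation that positivity upgrades the norming condition $\|T\xi\|=\|T\|$ to the eigenvector identity $T\xi=\|T\|\xi$. There is no substantive obstacle, only the mild bookkeeping of noting that $T\xi\neq 0$ before invoking linear dependence in the forward direction, which is automatic from the standing assumption that $T$ is nonzero.
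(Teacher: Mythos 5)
Your proof is correct and follows essentially the same route as the paper: both reduce the statement to Theorem \ref{th.17.5} and use positivity of $T$ to upgrade the norming condition $\|T\xi\|=\|T\|$ to the eigenvector identity $T\xi=\|T\|\xi$ (the paper merely asserts this step, whereas you supply the spectral argument). The only cosmetic differences are that you invoke part (iii) of the theorem rather than part (ii) in the forward direction, and in the reverse direction you detour through the equality case of Cauchy--Schwarz where the identity $|[T\xi,S\xi]|=\|T\|\,|[\xi,S\xi]|$ already suffices.
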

\begin{proof}
(i)$\Rightarrow$(ii) Let $T\parallel S$. By the equivalence $(i) \Leftrightarrow (ii)$ of Theorem \ref{th.17.5}, there exists a unit vector $\xi\in\mathscr{H}$ such that $|[T\xi, S\xi]| = \|T\|\,\|S\|$. So, we get $\|T\xi\|= \|T\|$. Since $T$ is positive, we obtain $T\xi = \|T\|\xi$. Therefore,
$$\|T\|\,|[\xi, S\xi]| = |[\|T\|\xi, S\xi]| = |[T\xi, S\xi]| = \|T\|\,\|S\|,$$
or equivalently, $|[S\xi, \xi]| = \|S\|$.\\
The implication (ii)$\Rightarrow$(i) follows by the same argument.
\end{proof}
\begin{corollary}\label{cr.17.7}
Let $\mathscr{H}$ be finite dimensional and $T, S\in\mathbb{B}(\mathscr{H})$. The following statements are equivalent:
\begin{itemize}
\item[(i)] $T\parallel S$.
\item[(ii)] There exists a unit vector $\xi\in\mathscr{H}$ such that $T^*T\xi = \|T\|^2\xi$ and $|[T\xi, S\xi]| = \|T\|\,\|S\|$.
\end{itemize}
\end{corollary}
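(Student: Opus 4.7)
The plan is to leverage Theorem \ref{th.17.5} (the immediately preceding result) and to supplement it with a single standard spectral fact: in any Hilbert space, a unit vector $\xi$ that attains the operator norm of $T$ is automatically an eigenvector of $T^*T$ at the maximal eigenvalue $\|T\|^2$. Thus the corollary is essentially Theorem \ref{th.17.5} with the norm-attainment condition repackaged spectrally; the finite dimension hypothesis is used only to produce such a $\xi$.

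For the implication (i) $\Rightarrow$ (ii), I would start by invoking the equivalence (i) $\Leftrightarrow$ (iii) of Theorem \ref{th.17.5} to obtain a unit vector $\xi \in \mathscr{H}$ with $\|T\xi\| = \|T\|$ and $|[T\xi, S\xi]| = \|T\|\,\|S\|$. It then remains to upgrade $\|T\xi\| = \|T\|$ to the eigenvalue equation $T^*T\xi = \|T\|^2 \xi$. I would argue that the positive operator $\|T\|^2 I - T^*T$ satisfies $[(\|T\|^2 I - T^*T)\xi, \xi] = \|T\|^2 - \|T\xi\|^2 = 0$, and then invoke the Cauchy--Schwarz inequality for the positive sesquilinear form $(\eta, \zeta) \mapsto [(\|T\|^2 I - T^*T)\eta, \zeta]$ to conclude that $(\|T\|^2 I - T^*T)\xi = 0$.

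For the reverse implication (ii) $\Rightarrow$ (i), I would observe that the eigenvalue equation $T^*T\xi = \|T\|^2 \xi$ together with $\|\xi\| = 1$ forces $\|T\xi\|^2 = [T^*T\xi, \xi] = \|T\|^2$, so $\|T\xi\| = \|T\|$. Combined with the hypothesis $|[T\xi, S\xi]| = \|T\|\,\|S\|$, this is precisely condition (ii) of Theorem \ref{th.17.5}, which yields $T \parallel S$.

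There is no real obstacle: the core content is already present in Theorem \ref{th.17.5}, and the only additional ingredient is the elementary fact that norm-attaining unit vectors of $T$ coincide with eigenvectors of $T^*T$ for its largest eigenvalue. The argument does not actually need the finite-dimensional hypothesis for this spectral step itself; that hypothesis enters only when applying Theorem \ref{th.17.5} to guarantee existence of the norm-attaining unit vector.
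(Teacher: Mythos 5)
Your argument is correct, but it follows a different route from the paper's. The paper proves this corollary by viewing $\mathbb{B}(\mathscr{H})$ as a Hilbert $C^*$-module over itself and invoking the equivalence (i) $\Leftrightarrow$ (iii) of Lemma \ref{lemma.280}: $T\parallel S$ iff $T^*T\parallel T^*S$ and $\|T^*S\|=\|T\|\,\|S\|$; since $T^*T$ is positive, Corollary \ref{cr.17.6} applied to the pair $(T^*T, T^*S)$ then yields exactly the eigenvector condition $T^*T\xi=\|T\|^2\xi$ together with $|[T^*S\xi,\xi]|=\|T\|\,\|S\|$. You instead bypass the module machinery entirely: you take the norm-attaining vector supplied by Theorem \ref{th.17.5} and upgrade $\|T\xi\|=\|T\|$ to $T^*T\xi=\|T\|^2\xi$ via the standard fact that a positive operator $A=\|T\|^2 I-T^*T$ with $[A\xi,\xi]=0$ annihilates $\xi$ (Cauchy--Schwarz for the form, or $\|A^{1/2}\xi\|^2=0$); the converse direction is immediate. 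Both proofs are short and sound. Your version makes transparent that the eigenvector condition is just a spectral restatement of norm attainment (valid in any Hilbert space, with finite dimensionality needed only to guarantee the attaining vector exists), whereas the paper's version exhibits the corollary as a formal instance of the module-level characterization and of Corollary \ref{cr.17.6} for positive operators --- the same spectral fact is in effect hidden inside the proof of that corollary.
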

\begin{proof}
By the equivalence $(i) \Leftrightarrow (iii)$ of Lemma \ref{lemma.280}, $T\parallel S$ if and only if $T^*T\parallel T^*S$ and $\|T^*S\| = \|T\|\,\|S\|$. Since $T^*T$ is positive, so the statement follows from Corollary \ref{cr.17.6}.
\end{proof}
As an application of Corollary \ref{cr.17.6} we have the following result.
\begin{corollary}\label{cr.17.8}
Let $T_i\in\mathbb{B}(\mathscr{H}_i),\,(1\leq i\leq n)$ be nonzero positive operators on finite dimensional Hilbert spaces. Then for every $T_i, S_i\in\mathbb{B}(\mathscr{H}_i),\,(1\leq i\leq n)$ the following statements are equivalent:
\begin{itemize}
\item[(i)] ${\rm diag} (T_1, \cdots, T_n) \| {\rm diag} (S_1, \cdots, S_n)$.
\item[(ii)] There exists a unit vector $(\xi_1, \cdots, \xi_n)\in\mathscr{H}_1\oplus\cdots \oplus\mathscr{H}_n$ such that
$$\Big(\sum_{i = 1}^n \|T_i\xi_i\|^2\Big)^\frac{1}{2} = \max\{\|T_i\|; \quad 1\leq i\leq n\}\quad \mbox{and} \quad |\sum_{i = 1}^n[S_i\xi_i, \xi_i]| = \max\{\|S_i\|; \quad 1\leq i\leq n\}.$$
\end{itemize}
\end{corollary}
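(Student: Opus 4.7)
The strategy is to apply Corollary \ref{cr.17.6} to the block diagonal operators $T := {\rm diag}(T_1, \ldots, T_n)$ and $S := {\rm diag}(S_1, \ldots, S_n)$ acting on the finite dimensional Hilbert space $\mathscr{H}_1 \oplus \cdots \oplus \mathscr{H}_n$. Since each $T_i$ is positive and at least one is nonzero, $T$ is itself a nonzero positive operator; the block diagonal structure also gives $\|T\| = \max\{\|T_i\|: 1\leq i\leq n\}$ and $\|S\| = \max\{\|S_i\|: 1\leq i\leq n\}$.

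For any unit vector $\xi = (\xi_1, \ldots, \xi_n)$ in the direct sum, the componentwise action of the diagonal operators yields
$$\|T\xi\|^2 = \sum_{i=1}^n \|T_i\xi_i\|^2 \quad \mbox{and} \quad [S\xi, \xi] = \sum_{i=1}^n [S_i\xi_i, \xi_i].$$
By Corollary \ref{cr.17.6}, $T \parallel S$ is equivalent to the existence of such a $\xi$ satisfying $T\xi = \|T\|\xi$ and $|[S\xi, \xi]| = \|S\|$. To recast the eigenvector condition $T\xi = \|T\|\xi$ in the norm form $\big(\sum \|T_i\xi_i\|^2\big)^{1/2} = \max_i\|T_i\|$ appearing in (ii), I would invoke the standard fact, which is immediate from the spectral decomposition $T = \sum_j \mu_j P_j$, that for a positive operator on a finite dimensional space and a unit vector $\xi$, the equality $\|T\xi\| = \|T\|$ holds if and only if $T\xi = \|T\|\xi$; indeed, equality in $\|T\xi\|^2 = \sum_j \mu_j^2 \|P_j\xi\|^2 \leq \|T\|^2$ forces $\xi$ into the eigenspace of the largest eigenvalue.

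Combining these observations, the two conditions supplied by Corollary \ref{cr.17.6} translate exactly to the two displayed equations in (ii), establishing the equivalence in both directions simultaneously. I do not anticipate any substantive obstacle beyond carefully tracking the passage between the direct-sum formulation and its componentwise form; it is the positivity of $T$ that is essential, since it is what permits the replacement of the eigenvector condition by the cleaner norm condition used in the statement.
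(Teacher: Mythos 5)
Your proposal is correct and follows exactly the route the paper intends: the paper gives no written proof, presenting the corollary simply as ``an application of Corollary \ref{cr.17.6}'' to the block diagonal operators, and your argument---including the identification $\|T\xi\|^2=\sum_i\|T_i\xi_i\|^2$ and the use of positivity to pass between $\|T\xi\|=\|T\|$ and $T\xi=\|T\|\xi$---supplies precisely the details that application requires.
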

If $T\in\mathbb{B}(\mathscr{H})$, then by Corollary \ref{cr.055} for
any $S\in\mathbb{B}(\mathscr{H})$, $T\parallel S$ if and only if there exists a sequence of unit vectors $\{\xi_n\}$ in $\mathscr{H}$ such that $\lim_{n\rightarrow\infty} |[T\xi_n, S\xi_n]| = \|T\|\,\|S\|.$
It is easy to see that if there exists a unit vector $\xi\in\mathscr{H}$ such that $|[T\xi, S\xi]| = \|T\|\,\|S\|$, then $T\parallel S$. The question is under which conditions the converse is true. When the
Hilbert space is finite dimensional, it follows from the equivalence $(i) \Leftrightarrow (ii)$ of Theorem \ref{th.17.5}, there exists a unit vector $\xi\in\mathscr{H}$ such that $|[T\xi, S\xi]| = \|T\|\,\|S\|$.
The following example shows that the condition finite dimensional in the implication $(i)\Rightarrow (ii)$ of Theorem \ref{th.17.5} is essential.
\begin{example}\label{ex.17.97}
Consider the shift operator $T\,:\ell^2\longrightarrow \ell^2$ defined by
$$T(\xi_1, \xi_2, \xi_3, \cdots) = (0, \xi_1, \xi_2, \xi_3, \cdots).$$
One can easily observe that $r(T) = 1 = \|T\|$. By the equivalence $(i) \Leftrightarrow (iv)$ of Lemma \ref{lemma.280}, we get $T \parallel I$. But there is no unit vector $\xi\in\ell^2$ such that $|[T\xi, I\xi]| = \|T\|\,\|I\|.$ Indeed, if there exists a unit vector $\xi\in\ell^2$ such that $|[T\xi, I\xi]| = \|T\|\,\|I\|$, then $$|0\overline{\xi_1} + \xi_1\overline{\xi_2} + \xi_2\overline{\xi_3} + \cdots| = 1 = (|0|^2 + |\xi_1|^2 + |\xi_2|^2 + \cdots)^\frac{1}{2}(|\xi_1|^2 + |\xi_2|^2 + |\xi_3|^2 + \cdots)^\frac{1}{2}.$$
It follows from the equality case in the Cauchy-Schwarz inequality, there exists $\gamma \in\mathbb{C}$ such that $(0, \xi_1, \xi_2, \xi_3, \cdots) = \gamma (\xi_1, \xi_2, \xi_3, \cdots)$. Thus $\xi_n = 0$ for all $n\in \mathbb{N}$. But this contradicts the fact that $\xi$ is a unit vector.
\end{example}
We now settle the problem for any infinite dimensional Hilbert space. We let $S_{\mathscr{H}} = \{\xi\in\mathscr{H}:\, \|\xi\| = 1\}$
and $M_T = \{\xi\in S_{\mathscr{H}}:\, \|T\xi\| = \|T\|\}$ be the unit sphere of $\mathscr{H}$ and the set of all unit vectors in $S_{\mathscr{H}}$ at which $T$ attains norm, respectively.
The proof of Theorem \ref{th.17.98} is a modification of one given by Paul et al. \cite[Theorem 3.1]{P.S.G}.
\begin{theorem}\label{th.17.98}
Let $T\in\mathbb{B}(\mathscr{H})$. If $S_{\mathscr{H}_0} = M_T$, where $\mathscr{H}_0$ is a finite dimensional
subspace of $\mathscr{H}$ and $\|T\|_{{\mathscr{H}_0}^\perp} = \sup\{\|T\zeta\| :\, \zeta\in{\mathscr{H}_0}^\perp,\, \|\zeta\| = 1\}< \|T\|$, then for any $S\in\mathbb{B}(\mathscr{H})$ the following statements are equivalent:
\begin{itemize}
\item[(i)] $T\parallel S$.
\item[(ii)] There exists a unit vector $\xi\in \mathscr{H}_0$ such that $|[T\xi, S\xi]| = \|T\|\,\|S\|$.
\end{itemize}
\end{theorem}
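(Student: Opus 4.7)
The plan is to prove the two implications separately. The direction (ii)$\Rightarrow$(i) is routine: a unit vector $\xi \in \mathscr{H}_0$ with $|[T\xi, S\xi]| = \|T\|\,\|S\|$ forces $\|T\xi\|=\|T\|$, $\|S\xi\|=\|S\|$ and linear dependence of $T\xi, S\xi$ via Cauchy--Schwarz; choosing $\lambda \in \mathbb{T}$ with $\|T\xi + \lambda S\xi\| = \|T\xi\| + \|S\xi\|$ then yields $\|T+\lambda S\|\geq \|T\|+\|S\|$, so $T\parallel S$. This is identical to the mechanism of (iii)$\Rightarrow$(i) in Theorem \ref{th.17.5}.

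For the nontrivial direction (i)$\Rightarrow$(ii), I would first invoke Corollary \ref{cr.055} to obtain a sequence of unit vectors $\{\xi_n\}\subset\mathscr{H}$ with $\lim_{n\to\infty}|[T\xi_n, S\xi_n]| = \|T\|\,\|S\|$ and, as part of the same statement, $\lim_{n\to\infty}\|T\xi_n\|=\|T\|$. The strategy is to extract a subsequence of $\{\xi_n\}$ that converges in norm to a unit vector in $\mathscr{H}_0$; continuity of the inner product and of $T,S$ will then transfer the limiting equality to the limit vector.

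The key structural observation is that $S_{\mathscr{H}_0}=M_T$ forces $T^*T\eta = \|T\|^2\eta$ for every $\eta\in\mathscr{H}_0$, because for the positive operator $T^*T$ a unit vector attaining the norm must be an eigenvector for the top of the spectrum. In particular $\mathscr{H}_0$ reduces $T^*T$, whence $T\mathscr{H}_0 \perp T\mathscr{H}_0^\perp$ (for $\eta\in\mathscr{H}_0$, $\zeta\in\mathscr{H}_0^\perp$ one computes $[T\eta,T\zeta]=[T^*T\eta,\zeta]=\|T\|^2[\eta,\zeta]=0$). Writing the orthogonal decomposition $\xi_n=\eta_n+\zeta_n$ with $\eta_n\in\mathscr{H}_0$ and $\zeta_n\in\mathscr{H}_0^\perp$, the Pythagorean identity gives
\[
\|T\xi_n\|^2 \;=\; \|T\eta_n\|^2+\|T\zeta_n\|^2 \;\leq\; \|T\|^2\|\eta_n\|^2 + \|T\|_{\mathscr{H}_0^\perp}^2 \|\zeta_n\|^2.
\]
Combined with $\|\eta_n\|^2+\|\zeta_n\|^2=1$, the strict inequality $\|T\|_{\mathscr{H}_0^\perp}<\|T\|$, and $\|T\xi_n\|\to\|T\|$, this forces $\|\zeta_n\|\to 0$ and $\|\eta_n\|\to 1$. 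Finite-dimensionality of $\mathscr{H}_0$ now lets me pass to a subsequence along which $\eta_n\to\eta$ for some unit $\eta\in\mathscr{H}_0$; since also $\zeta_n\to 0$, we get $\xi_n\to\eta$ in norm, and continuity gives $|[T\eta, S\eta]|=\|T\|\,\|S\|$.

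The main obstacle is precisely the step forcing $\zeta_n\to 0$, which is where both hypotheses of the theorem are essential: the subspace structure of $M_T$ provides the eigenvalue identity responsible for the orthogonal splitting $T\mathscr{H}_0 \perp T\mathscr{H}_0^\perp$, while the strict gap $\|T\|_{\mathscr{H}_0^\perp}<\|T\|$ ensures the $\mathscr{H}_0^\perp$-component of $\xi_n$ cannot survive in the limit. The finite-dimensionality of $\mathscr{H}_0$ is then used once more to convert the boundedness of $\{\eta_n\}$ into norm convergence; Example \ref{ex.17.97} (where $T$ is the unilateral shift and $\mathscr{H}_0=\ell^2$) confirms that dropping finite-dimensionality destroys the conclusion, as no such compactness extraction is then available.
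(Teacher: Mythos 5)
Your proof is correct and follows essentially the same route as the paper: Corollary \ref{cr.055} supplies the norming sequence, the orthogonal decomposition along $\mathscr{H}_0\oplus{\mathscr{H}_0}^\perp$ together with the eigenvector property of $T^*T$ on $\mathscr{H}_0$ yields the Pythagorean splitting, the gap $\|T\|_{{\mathscr{H}_0}^\perp}<\|T\|$ kills the ${\mathscr{H}_0}^\perp$-components, and finite-dimensionality gives the convergent subsequence. If anything, your quantitative estimate $\|T\xi_n\|^2\le\|T\|^2-\bigl(\|T\|^2-\|T\|^2_{{\mathscr{H}_0}^\perp}\bigr)\|\zeta_n\|^2$, which forces $\|\zeta_n\|\to 0$ directly, is a cleaner finish than the paper's concluding claim that $\eta_n=0$ for all $n$ (which is stated too strongly; only the decay of $\|\eta_n\|$ is needed or available).
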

\begin{proof}
Obviously, (ii)$\Rightarrow$(i).

Suppose (i) holds. By Corollary \ref{cr.055}, there exists a sequence of unit vectors $\{\zeta_n\}$ in $\mathscr{H}$ such that
\begin{align}\label{id.17.981}
\lim_{n\rightarrow\infty} |[T\zeta_n, S\zeta_n]| = \|T\|\,\|S\|,\quad \lim_{n\rightarrow\infty} \|T\zeta_n\| = \|T\| \quad \mbox{and}\quad \lim_{n\rightarrow\infty} \|S\zeta_n\| = \|S\|.
\end{align}
For each $n\in\mathbb{N}$ we have
\begin{align}\label{id.17.981.1}
\zeta_n = \xi_n + \eta_n,
\end{align}
where $\xi_n \in \mathscr{H}_0$ and $\eta_n \in {\mathscr{H}_0}^\perp$.

Since $\mathscr{H}_0$ is a finite dimensional subspace and $\|\xi_n\| \leq 1$, so $\{\xi_n\}$ has a convergent subsequence
converging to some element of $\mathscr{H}_0$. Without loss of generality we assume that $\lim_{n\rightarrow\infty} \xi_n = \xi$. Since $S_{\mathscr{H}_0} = M_T$, so
\begin{align}\label{id.17.982}
\lim_{n\rightarrow\infty} \|T\xi_n\| = \|T\xi\| = \|T\|\,\|\xi\|
\end{align}
and
\begin{align}\label{id.17.983}
\lim_{n\rightarrow\infty} \|\eta_n\|^2 = \lim_{n\rightarrow\infty}( \|\zeta_n\|^2 - \|\xi_n\|^2 ) = 1 - \|\xi\|^2.
\end{align}
Now for each non-zero element $\xi_n \in \mathscr{H}_0$, by hypothesis $\frac{\xi_n}{\|\xi_n\|}\in S_{\mathscr{H}_0} = M_T $ and so $\|T\xi_n\| = \|T\|\,\|\xi_n\|$. Thus
\begin{align*}
\|T^*T\xi_n\|\,\|\xi_n\| \leq \|T^*T\|\,\|\xi_n\|^2 = \|T\|^2\,\|\xi_n\|^2 = \|T\xi_n\|^2 = [T^*T\xi_n, \xi_n] \leq \|T^*T\xi_n\|\,\|\xi_n\|.
\end{align*}
Hence $[T^*T\xi_n, \xi_n] = \|T^*T\xi_n\|\,\|\xi_n\|$. By the equality case of Cauchy--Schwarz inequality $T^*T\xi_n = \lambda_n\xi_n$ for some $\lambda_n\in\mathbb{C}$ and therefore
\begin{align}\label{id.17.985}
[T^*T\xi_n, \eta_n] = [T^*T\eta_n, \xi_n] = 0.
\end{align}
By (\ref{id.17.981.1}) and (\ref{id.17.985}) we have
\begin{align*}
\|T\zeta_n\|^2 & = [T^*T\zeta_n, \zeta_n]
\\& = [T^*T(\xi_n + \eta_n), (\xi_n + \eta_n)]
\\& = [T^*T\xi_n, \xi_n] + [T^*T\xi_n, \eta_n] + [T^*T\eta_n, \xi_n] + [T^*T\eta_n, \eta_n]
\\& = \|T\xi_n\|^2 + \|T\eta_n\|^2,
\end{align*}
or equivalently,
\begin{align}\label{id.17.985.1}
\|T\eta_n\|^2 = \|T\zeta_n\|^2 - \|T\xi_n\|^2.
\end{align}
By (\ref{id.17.981}), (\ref{id.17.982}), (\ref{id.17.983}) and (\ref{id.17.985.1}) we reach
\begin{align}\label{id.17.984}
\lim_{n\rightarrow\infty} \|T\eta_n\|^2 = \|T\|^2(1 - \|\xi\|^2) = \|T\|^2\,\lim_{n\rightarrow\infty} \|\eta_n\|^2.
\end{align}
By hypothesis $\|T\|_{{\mathscr{H}_0}^\perp} < \|T\|$ and so by (\ref{id.17.984}) there does
not exist any non-zero subsequence of $\{\|\eta_n\|\}$. (Indeed, if there exists a non-zero subsequence of $\{\|\eta_n\|\}$, then (\ref{id.17.984}) implies $\|T\|_{{\mathscr{H}_0}^\perp} = \|T\|$.)
So we conclude $\eta_n = 0$ for all $n\in\mathbb{N}$. Then (\ref{id.17.981}), (\ref{id.17.983}) imply
$\|\xi\| = 1$ and $$|[T\xi, S\xi]| = \lim_{n\rightarrow\infty} |[T\xi_n, S\xi_n]| = \lim_{n\rightarrow\infty} |[T\zeta_n, S\zeta_n]| = \|T\|\,\|S\|.$$
\end{proof}

In the following propositions we use some ideas of \cite{B.S}.
\begin{proposition}\label{pr.17.81}
Let $\mathscr{H}$ be a finite dimensional Hilbert space, $T, S\in\mathcal{C}_p$ and $T = U|T|, S = V|S|$ be their polar decompositions. If $1<p<\infty$, then the following statements are equivalent:
\begin{itemize}
\item[(i)] $T\parallel S$ in the Schatten $p$-norm.
\item[(ii)] $\|T\|_p\,\Big|{\rm tr}(|T|^{p-1}U^*S)\Big| = \|S\|_p\,{\rm tr}(|T|^p)$.
\item[(iii)] $\|S\|_p\,\Big|{\rm tr}(|S|^{p-1}V^*T)\Big| = \|T\|_p\,{\rm tr}(|S|^p)$.
\end{itemize}

The same is true for $p=1$ if $T, S\in\mathcal{C}_1$ are invertible.
\end{proposition}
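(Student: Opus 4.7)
The plan is to reduce norm-parallelism to Birkhoff--James orthogonality via Theorem \ref{th.005}, and then exploit the Fr\'echet differentiability of the Schatten $p$-norm to rewrite the orthogonality condition as a trace identity.

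First, by Theorem \ref{th.005}, $T\parallel S$ in the Schatten $p$-norm if and only if there exists $\lambda\in\mathbb{T}$ such that $T\perp_{BJ}\bigl(\|S\|_p T+\lambda\|T\|_p S\bigr)$. Since $\|\cdot\|_p$ is Fr\'echet differentiable at $T$ for $1<p<\infty$ (and also at invertible $T$ when $p=1$, by \cite{A.E.G}), I would argue that for any $X\in\mathcal{C}_p$ the Birkhoff--James orthogonality $T\perp_{BJ} X$ is equivalent to
\[
\frac{d}{dt}{\Big|}_{t=0}\|T+t\gamma X\|_p = 0 \quad \text{for every } \gamma\in\mathbb{C}.
\]
Using the derivative formula recalled in the introduction, this becomes $\mbox{Re}\,{\rm tr}\bigl(|T|^{p-1}U^*(\gamma X)\bigr)=0$ for every $\gamma\in\mathbb{C}$, which is equivalent to ${\rm tr}\bigl(|T|^{p-1}U^*X\bigr)=0$.

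Next comes a direct computation. Substituting $X=\|S\|_p T+\lambda\|T\|_p S$ and using
\[
{\rm tr}\bigl(|T|^{p-1}U^*T\bigr)={\rm tr}\bigl(|T|^{p-1}|T|\bigr)={\rm tr}(|T|^p)=\|T\|_p^p,
\]
the vanishing condition ${\rm tr}(|T|^{p-1}U^*X)=0$ becomes
\[
\|S\|_p\|T\|_p^p+\lambda\|T\|_p\,{\rm tr}\bigl(|T|^{p-1}U^*S\bigr)=0.
\]
A unimodular $\lambda$ satisfying this exists if and only if $\bigl|{\rm tr}(|T|^{p-1}U^*S)\bigr|=\|S\|_p\|T\|_p^{p-1}$, which after multiplying by $\|T\|_p$ is precisely condition (ii). The equivalence (i)$\Leftrightarrow$(iii) then follows either by symmetry of the parallelism relation applied to $S,T$ with polar decomposition $S=V|S|$, or directly from the $y\perp_{BJ}(\cdots)$ clause of Theorem \ref{th.005}.

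The main subtlety to verify carefully is the passage from the real-part condition given by the Fr\'echet derivative to the vanishing of the complex trace. Because $T\perp_{BJ} X$ must hold against every complex multiple $\gamma X$, one obtains $\mbox{Re}\bigl(\gamma\cdot{\rm tr}(|T|^{p-1}U^*X)\bigr)=0$ for all $\gamma\in\mathbb{C}$, which forces the complex number ${\rm tr}(|T|^{p-1}U^*X)$ itself to vanish. For the case $p=1$ with $T,S$ invertible, the differentiability statement from \cite{A.E.G} yields the same derivative formula, so the argument transfers verbatim.
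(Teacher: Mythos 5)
Your argument is correct and follows the paper's overall architecture: reduce $T\parallel S$ to the Birkhoff--James condition $T\perp_{BJ}(\|S\|_pT+\lambda\|T\|_pS)$ via Theorem \ref{th.005}, translate that into the vanishing of ${\rm tr}\bigl(|T|^{p-1}U^*X\bigr)$, and read off (ii) from the computation ${\rm tr}(|T|^{p-1}U^*T)={\rm tr}(|T|^p)$; the existence of a unimodular $\lambda$ is then exactly the modulus condition, and (iii) follows by symmetry. The one place you diverge is in how the middle step is justified. The paper simply cites \cite[Theorem 2.1]{B.S} for the implication $T\perp_{BJ}X\Rightarrow{\rm tr}(|T|^{p-1}U^*X)=0$, and for the converse it avoids differentiability altogether, running a H\"older-inequality argument: from ${\rm tr}(|T|^{p-1}U^*X)=0$ one gets ${\rm tr}(|T|^p)={\rm tr}\bigl[|T|^{p-1}(|T|+\xi U^*X)\bigr]\le[{\rm tr}(|T|^p)]^{1-1/p}\|T+\xi X\|_p$, hence $\|T\|_p\le\|T+\xi X\|_p$ for all $\xi$. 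You instead prove both directions of the equivalence from the Fr\'echet derivative: the forward direction by testing against all complex multiples $\gamma X$ to upgrade the real-part condition to vanishing of the complex trace (which you correctly flag as the delicate point), and the converse by the standard fact that a convex function of $t$ with vanishing derivative at $t=0$ attains its minimum there. Both routes are sound; yours essentially inlines the proof of the cited Bhatia--\v{S}emrl lemma, while the paper's H\"older argument for the converse has the mild advantage of not needing smoothness of the norm at $T$, though for $1<p<\infty$ (and for invertible $T$ when $p=1$) that smoothness is available anyway, so nothing is lost.
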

\begin{proof}
(i)$\Rightarrow$(ii) Let (i) hold. By Theorem \ref{th.005}, there exists $\lambda\in\mathbb{T}$ such that
$T\perp_{BJ}(\|S\|_{p}T + \lambda \|T\|_{p}S)$. By \cite[Theorem 2.1]{B.S}, we get
$${\rm tr}[|T|^{p - 1}U^*(\|S\|_{p}T + \lambda \|T\|_{p}S)] = 0,$$
or equivalently,
$$\|T\|_{p}\,\Big|{\rm tr}(|T|^{p-1}U^*S)\Big| = \|S\|_{p}\,{\rm tr}(|T|^p).$$
(ii)$\Rightarrow$(i) Let (ii) hold. There exists $\lambda\in\mathbb{T}$ such that
$${\rm tr}[|T|^{p - 1}U^*(\|S\|_{p}T + \lambda \|T\|_{p}S)] = 0.$$
Hence
$${\rm tr}|T|^{p} = {\rm tr}[|T|^{p - 1}( |T| + \xi U^*(\|S\|_{p}T + \lambda \|T\|_{p}S))]$$
for all $\xi\in\mathbb{C}$. From the above and the H\"{o}lder inequality, we get
\begin{align*}
{\rm tr}(|T|^{p}) &\leq \Big\||T|^{p - 1}\Big\|_{\frac{1}{{1 - \frac{1}{p}}}}\,\Big\||T| + \xi U^*(\|S\|_{p}T + \lambda \|T\|_{p}S)\Big\|_{p}
\\& = [{\rm tr}(|T|^{p})]^{1 - \frac{1}{p}}\,\Big\|T + \xi (\|S\|_{p}T + \lambda \|T\|_{p}S)\Big\|_{p}.
\end{align*}
It follows that
$$\|T\|_{p} \leq \Big\|T + \xi (\|S\|_{p}T + \lambda \|T\|_{p}S)\Big\|_{p} \qquad (\xi\in\mathbb{C}),$$
or equivalently,
$$T\perp_{BJ}(\|S\|_{p}T + \lambda \|T\|_{p}S).$$
Thus, by Theorem \ref{th.005}, $T\parallel S$ in the Schatten $p$-norm.

(i)$\Longleftrightarrow$ (iii) It follows from the equivalence $(i) \Leftrightarrow (ii)$ by changing the roles of $T$ and $S$.
\end{proof}
Now we present a characterization concerning compact operators. We need the following lemma.
\begin{lemma}\cite[Theorem 3.10]{M.Z}\label{lm.28}
Let $T\in \mathbb{B}(\mathscr{H})$. Then the following statements are equivalent:
\begin{itemize}
\item[(i)] $T\parallel I$.
\item[(ii)] There exist a sequence of unit vectors $\{\xi_n\}$ in $\mathscr{H}$ and $\lambda\in\mathbb{T}$ such that
$$\lim_{n\rightarrow\infty} \Big\|T\xi_n-\lambda\|T\|\xi_n\Big\| = 0.$$
\end{itemize}
\end{lemma}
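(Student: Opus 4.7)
The plan is to deduce Lemma \ref{lm.28} from Corollary \ref{cr.055} applied to the special case $S = I$. Setting $S=I$ in that corollary, the statement $T\parallel I$ is equivalent to the existence of a sequence of unit vectors $\{\xi_n\}$ such that $\lim_{n\to\infty} |[T\xi_n,\xi_n]|=\|T\|$, and in that case automatically $\lim_{n\to\infty}\|T\xi_n\|=\|T\|$. So the task reduces to transforming this asymptotic eigenvalue-like condition into the norm-convergence condition stated in (ii).

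For the direction (ii)$\Rightarrow$(i), I would take the inner product of $T\xi_n-\lambda\|T\|\xi_n$ with $\xi_n$ to see that $[T\xi_n,\xi_n]-\lambda\|T\|\to 0$, whence $|[T\xi_n,\xi_n]|\to\|T\|=\|T\|\,\|I\|$, and then invoke Corollary \ref{cr.055} to conclude $T\parallel I$.

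For the direction (i)$\Rightarrow$(ii), Corollary \ref{cr.055} supplies a sequence $\{\xi_n\}$ with $|[T\xi_n,\xi_n]|\to\|T\|$ and $\|T\xi_n\|\to\|T\|$. The sequence of complex numbers $[T\xi_n,\xi_n]$ lies in the disk of radius $\|T\|$, so after passing to a subsequence I can assume $[T\xi_n,\xi_n]\to\lambda\|T\|$ for some $\lambda\in\mathbb{T}$. The key computation is then the expansion
\begin{align*}
\bigl\|T\xi_n-\lambda\|T\|\xi_n\bigr\|^2 = \|T\xi_n\|^2 - 2\,\mbox{Re}\bigl(\overline{\lambda}\|T\|\,[T\xi_n,\xi_n]\bigr) + \|T\|^2,
\end{align*}
whose right-hand side tends to $\|T\|^2-2\|T\|^2+\|T\|^2=0$.

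I expect no serious obstacle: the only subtlety is that a priori the scalar sequence $[T\xi_n,\xi_n]$ need not converge, only its modulus does, so the extraction of a subsequence along which the argument also converges (to identify the unimodular $\lambda$) is the step that must be made explicit. Everything else is a direct application of Corollary \ref{cr.055} together with the polarization-type expansion above.
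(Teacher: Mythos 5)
Your argument is correct. Note first that the paper offers no proof of this lemma at all: it is imported verbatim as \cite[Theorem 3.10]{M.Z}, so there is no internal proof to compare against. What you have produced is a self-contained derivation from Corollary \ref{cr.055}, and it is legitimate within the paper's logical architecture: Corollary \ref{cr.055} rests on Theorem \ref{th.05} (proved via Gelfand--Naimark and a squeeze argument), and Lemma \ref{lm.28} is only used afterwards, in Theorem \ref{th.29}, so there is no circularity. Both directions check out: for (ii)$\Rightarrow$(i), pairing $T\xi_n-\lambda\|T\|\xi_n$ against $\xi_n$ gives $[T\xi_n,\xi_n]\to\lambda\|T\|$, hence $|[T\xi_n,\xi_n]|\to\|T\|\,\|I\|$; for (i)$\Rightarrow$(ii), the corollary supplies $|[T\xi_n,\xi_n]|\to\|T\|$ together with the crucial extra conclusion $\|T\xi_n\|\to\|T\|$, and your subsequence extraction to pin down a unimodular $\lambda$ (the one genuine subtlety, which you correctly flag) combined with the expansion $\|T\xi_n-\lambda\|T\|\xi_n\|^2=\|T\xi_n\|^2-2\,\mathrm{Re}(\overline{\lambda}\|T\|[T\xi_n,\xi_n])+\|T\|^2\to 0$ closes the argument. (The degenerate case $T=0$ is trivial and worth a one-line remark, but costs nothing.) In effect you have shown that the cited external result is redundant given the machinery already developed in this paper, which is a tidier state of affairs than the citation.
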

\begin{theorem}\label{th.29}
Let $T\in \mathbb{K}(\mathscr{H})$. Then the following statements are equivalent:
\begin{itemize}
\item[(i)] $T\parallel I$.
\item[(ii)] $\lambda \|T\|$ is an eigenvalue of $T$ for some $\lambda\in\mathbb{T}$.
\item[(iii)] $|T|\parallel I$.
\end{itemize}
\end{theorem}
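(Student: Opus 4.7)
The plan is to establish the cycle $(i)\Rightarrow(ii)\Rightarrow(iii)\Rightarrow(i)$, using the compactness of $T$ to upgrade the approximate-eigenvector statement of Lemma~\ref{lm.28} to a genuine eigenvector statement.

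For $(i)\Rightarrow(ii)$, I would apply Lemma~\ref{lm.28} to produce unit vectors $\{\xi_n\}\subset\mathscr{H}$ and $\lambda\in\mathbb{T}$ with $\|T\xi_n-\lambda\|T\|\xi_n\|\to 0$. Because $T$ is compact, $\{T\xi_n\}$ admits a norm-convergent subsequence, say $T\xi_{n_k}\to\eta$; then $\lambda\|T\|\xi_{n_k}\to\eta$ as well, and setting aside the trivial case $T=0$, dividing by $\lambda\|T\|$ produces a unit vector $\xi:=\eta/(\lambda\|T\|)$ that is the limit of $\xi_{n_k}$. Passing to the limit yields $T\xi=\lambda\|T\|\xi$.

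For the two implications leaving $(ii)$, the key observation is that $T\xi=\lambda\|T\|\xi$ with $\|\xi\|=1$ forces $\|T\xi\|=\|T\|$, whence $[T^*T\xi,\xi]=\|T\|^2=\|T^*T\|$. The equality case of Cauchy--Schwarz combined with positivity of $T^*T$ gives $T^*T\xi=\|T\|^2\xi$, so $|T|\xi=\|T\|\xi$. From here, $(ii)\Rightarrow(i)$ is the one-line calculation $(T+\lambda I)\xi=\lambda(\|T\|+1)\xi$, which forces $\|T+\lambda I\|\geq\|T\|+1$, with the triangle inequality supplying equality; and $(ii)\Rightarrow(iii)$ is the parallel calculation $(|T|+I)\xi=(\|T\|+1)\xi$, witnessing $|T|\parallel I$.

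The main obstacle is the remaining implication $(iii)\Rightarrow(i)$, which I would handle by first passing through $(ii)$. Applying the already-established $(i)\Rightarrow(ii)$ to the compact positive operator $|T|$, the hypothesis $|T|\parallel I$ produces $\mu\in\mathbb{T}$ such that $\mu\|T\|$ is an eigenvalue of $|T|$; positivity of $|T|$ forces $\mu=1$, so $|T|\xi=\|T\|\xi$ for some unit vector $\xi$. The delicate point is transferring this eigenvector of $|T|$ to an eigenvector of $T$ with eigenvalue of modulus $\|T\|$. Writing $T=U|T|$, one has $T\xi=\|T\|\,U\xi$, and the argument must arrange that $U\xi$ is a complex multiple of $\xi$. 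I would try to achieve this inside the finite-dimensional eigenspace $E=\ker(|T|^2-\|T\|^2 I)$, on which $T$ acts as $\|T\|$ times the restriction of the partial isometry $U$; the finite-dimensional spectral theorem applied to that restriction should furnish an eigenvector of $T$ with eigenvalue $\lambda\|T\|$ for some $\lambda\in\mathbb{T}$, thereby giving $(ii)$ and closing the cycle via $(ii)\Rightarrow(i)$.
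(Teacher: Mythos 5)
Your implications (i)$\Rightarrow$(ii), (ii)$\Rightarrow$(i) and (ii)$\Rightarrow$(iii) are fine and essentially follow the paper's route; in fact your one-line proof of (ii)$\Rightarrow$(i) via $(T+\lambda I)\xi=\lambda(\|T\|+1)\xi$ is cleaner than the paper's detour through (iii). The gap is exactly at the point you called delicate, namely (iii)$\Rightarrow$(i), and it cannot be closed the way you propose. The partial isometry $U$ in $T=U|T|$ has no reason to map $E=\ker(|T|^2-\|T\|^2I)$ into itself, so ``the restriction of $U$ to $E$'' is not an operator on $E$ and the finite-dimensional spectral theorem gives you nothing. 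Concretely, for $T=\begin{bmatrix} 0 & 1 \\ 0 & 0 \end{bmatrix}$ on $\mathbb{C}^2$ one has $|T|=\mathrm{diag}(0,1)$, $E=\mathbb{C}e_2$, and $Ue_2=e_1\perp E$.

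Worse, no repair is possible, because the implication (iii)$\Rightarrow$(i) is false. Since $|T|$ is positive, $r(|T|)=\||T|\|$ always, hence $\||T|+I\|=\||T|\|+1$ and statement (iii) holds for \emph{every} bounded operator; the theorem would then assert $T\parallel I$ for every compact $T$. The matrix above refutes this: $|T|\parallel I$, yet $r(\langle T,I\rangle)=r(T)=0\neq 1=\|T\|\,\|I\|$, so $T\nparallel I$ by the equivalence (i)$\Leftrightarrow$(iv) of Lemma \ref{lemma.280} (equivalently, $\|T+\lambda I\|=\bigl((3+\sqrt{5})/2\bigr)^{1/2}<2$ for every $\lambda\in\mathbb{T}$). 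For what it is worth, the paper's own argument for (iii)$\Rightarrow$(i) breaks at the same spot: from $\|T\xi_0\|=\|T\|$ it infers $\|T\|\leq\sup\{|\gamma|:\gamma\in\sigma(T)\}$, which is a non sequitur. The defensible content of the theorem is (i)$\Leftrightarrow$(ii) together with (ii)$\Rightarrow$(iii); you should state that and stop, rather than try to complete the cycle.
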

\begin{proof}
(i)$\Longrightarrow$(ii) Suppose that $T\parallel I$ holds. By Lemma \ref{lm.28}, there exists a sequence of unit vectors $\{\xi_n\}$ in $\mathscr{H}$ such that
\begin{align}\label{id.30}
\lim_{n\rightarrow\infty} \Big\|T\xi_n-\lambda\|T\|\xi_n\Big\| = 0.
\end{align}
Since $T$ is compact, there exist a subsequence $\{\xi_{n_k}\}$ and $\xi_0\in\mathscr{H}$ such that
\begin{align}\label{id.31}
\lim_{k\rightarrow\infty} T\xi_{n_k} = \xi_0.
\end{align}
Passing to the limit in (\ref{id.30}) we obtain $\Big\|\xi_0-\lim_{k\rightarrow\infty} \lambda \|T\|\xi_{n_k}\Big\| = 0$, or equivalently, $\lim_{k\rightarrow\infty} \xi_{n_k} = \frac{\overline{\lambda}}{\|T\|}\xi_0.$
Since $T$ is bounded, $$\lim_{k\rightarrow\infty} T\xi_{n_k} = \frac{\overline{\lambda}}{\|T\|}T\xi_0.$$
Therefore $\frac{\overline{\lambda}}{\|T\|}T\xi_0 = \xi_0$, or equivalently, $T\xi_0 = \lambda\|T\|\xi_0$. Thus $\lambda \|T\|$ is an eigenvalue of $T$.\\
(ii)$\Longrightarrow$(iii) Suppose that (ii) holds. Then there exists a unit vector $\xi_0\in\mathscr{H}$ such that $T\xi_0 = \lambda\|T\|\xi_0$. We also have $$\Big\||T|\xi_0\Big\| = \|T\xi_0\| = \Big\|\lambda\|T\|\xi_0\Big\| = \|T\| = \Big\||T|\Big\|,$$
we get $r(|T|) = \Big\||T|\Big\|$. By the equivalence $(i) \Leftrightarrow (iv)$ of Lemma \ref{lemma.280}, we have $|T|\parallel I$.\\
(iii)$\Longrightarrow$(i) Let $|T|\parallel I$. Since $T$ is compact we deduce that $|T|$ is compact. So, by the implication $(i) \Rightarrow (ii)$, there is $\lambda\in\mathbb{T}$ such that $\lambda \Big\||T|\Big\|$ is an eigenvalue of $|T|$. Therefore there exists a unit vector $\xi_0\in\mathscr{H}$ such that $|T|\xi_0 = \lambda\Big\||T|\Big\|\xi_0 = \lambda\|T\|\xi_0$. Hence
$$\|T\xi_0\| = \sqrt{[T^*T\xi_0, \xi_0]} = \sqrt{[|T|\xi_0, |T|\xi_0]} = \sqrt{[\lambda\|T\|\xi_0, \lambda\|T\|\xi_0]} = \|T\|,$$
whence $\|T\| \leq \sup\{|\gamma|: \, \gamma\in\sigma(T)\} = r(T) \leq \|T\|$. Thus we get $r(T) = \|T\|$. By the equivalence $(i) \Leftrightarrow (iv)$ of Lemma \ref{lemma.280}, we reach $T\parallel I$.
\end{proof}
\begin{remark} \label{re.31.1}
Notice that if $T\in \mathbb{B}(\mathscr{H})$ and $\lambda \|T\|$ is an eigenvalue of $T$ for some $\lambda\in\mathbb{T}$, then there exists a vector $x\in\mathscr{H}\smallsetminus\{0\}$ such that $Tx = \lambda\|T\|x$. Thus we have $\|Tx\| = \|T\|$. So, $r(T) = \|T\|$. By the equivalence $(i) \Leftrightarrow (iv)$ of Lemma \ref{lemma.280}, we get $T\parallel I$. On the other side, if we consider the shift operator $T\,:\ell^2\longrightarrow \ell^2$ defined by
$T(\xi_1, \xi_2, \xi_3, \cdots) = (0, \xi_1, \xi_2, \xi_3, \cdots)$, then $r(T) = 1 = \|T\|$. By the equivalence $(i) \Leftrightarrow (iv)$ of Lemma \ref{lemma.280} we get $T \parallel I$. Also, it is easily seen that $T$ has no eigenvalues and $T$ is not compact.
This shows that the condition of compactness in the implication $(ii)\Rightarrow (i)$ of Theorem \ref{th.29} is essential.
\end{remark}
Recall that if $\zeta$ and $\eta$ are elements of a Hilbert space $(\mathscr{H}, [., .])$, then $r(\zeta\otimes\eta) = |[\zeta, \eta]|$, where $\zeta\otimes\eta$ is the rank one operator defined by $(\zeta\otimes\eta)(\xi)=[\xi, \eta]\zeta \,\,(\xi\in\mathscr{H})$. As an immediate consequence of Theorem \ref{th.29} and Lemma \ref{lemma.280}, we get a characterization of the linear dependence of two elements of a Hilbert space.
\begin{corollary}\label{cr.35.11}
Let $\eta, \xi \in \mathscr{H}$. Then the following statements are equivalent:
\begin{itemize}
\item[(i)] $\eta\parallel\xi$.
\item[(ii)] $\zeta\otimes\eta\parallel \zeta\otimes\xi$ \quad for all $\zeta\in \mathscr{H}$.
\item[(iii)] $\eta\otimes\xi\parallel I$.
\item[(iv)] $\lambda \|\eta\|\,\|\xi\|$ is an eigenvalue of $\eta\otimes\xi$ for some $\lambda\in\mathbb{T}$.
\end{itemize}
\end{corollary}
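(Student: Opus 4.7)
The plan is to split the four conditions into three manageable equivalences: (iii)$\Leftrightarrow$(iv), (i)$\Leftrightarrow$(iii), and (i)$\Leftrightarrow$(ii), each one a short application of the tools just developed.

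For (iii)$\Leftrightarrow$(iv), I would simply invoke Theorem \ref{th.29} with the compact (in fact rank-one) operator $T = \eta \otimes \xi$, noting $\|\eta\otimes\xi\| = \|\eta\|\,\|\xi\|$; then condition (iii), namely $T\parallel I$, translates verbatim into the eigenvalue condition (iv).

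For (i)$\Leftrightarrow$(iii), I would regard $\mathbb{B}(\mathscr{H})$ as a Hilbert $C^*$-module over itself via $\langle A,B\rangle = A^*B$ and apply the equivalence (i)$\Leftrightarrow$(iv) of Lemma \ref{lemma.280} to the pair $(\eta\otimes\xi,\,I)$. Since $\langle \eta\otimes\xi,I\rangle = (\eta\otimes\xi)^* = \xi\otimes\eta$, condition (iii) reads $r(\xi\otimes\eta) = \|\xi\otimes\eta\| = \|\eta\|\,\|\xi\|$. Using the stated identities $r(\xi\otimes\eta) = |[\xi,\eta]|$ and $\|\xi\otimes\eta\| = \|\eta\|\,\|\xi\|$, the whole condition collapses to the single equation $|[\xi,\eta]| = \|\eta\|\,\|\xi\|$, which by the equality case of the Cauchy--Schwarz inequality is exactly linear dependence of $\eta$ and $\xi$, i.e.\ $\eta\parallel\xi$ (as noted in the introduction for Hilbert spaces).

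The implication (i)$\Rightarrow$(ii) is immediate: if $\eta$ and $\xi$ are linearly dependent, then for every $\zeta$ the rank-one operators $\zeta\otimes\eta$ and $\zeta\otimes\xi$ are scalar multiples of each other (explicitly, if $\xi = \alpha\eta$ then $\zeta\otimes\xi = \overline{\alpha}\,\zeta\otimes\eta$), hence norm-parallel. For (ii)$\Rightarrow$(i), I may assume $\eta\neq 0$ and specialize the hypothesis to $\zeta=\eta$, giving $\eta\otimes\eta\parallel\eta\otimes\xi$. A direct computation using $(a\otimes b)(c\otimes d) = [c,b]\,(a\otimes d)$ and $(a\otimes b)^* = b\otimes a$ yields
\[
(\eta\otimes\eta)^*(\eta\otimes\xi) \;=\; (\eta\otimes\eta)(\eta\otimes\xi) \;=\; \|\eta\|^{2}\,\eta\otimes\xi.
\]
Feeding this into Lemma \ref{lemma.280} (i)$\Leftrightarrow$(iv), the spectral-radius condition becomes $\|\eta\|^{2}\,r(\eta\otimes\xi) = \|\eta\|^{3}\|\xi\|$, i.e.\ $|[\eta,\xi]| = \|\eta\|\,\|\xi\|$, whence (i) by Cauchy--Schwarz.

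The only delicate point is bookkeeping around the adjoint and composition conventions for $\otimes$ in the paper's inner-product setup (conjugate-linear in the second slot); once $(a\otimes b)^*=b\otimes a$ and $(a\otimes b)(c\otimes d)=[c,b]\,(a\otimes d)$ are verified, everything else is a direct substitution into Lemma \ref{lemma.280} and Theorem \ref{th.29}.
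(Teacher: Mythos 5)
Your proposal is correct and follows essentially the same route as the paper: the paper likewise reduces everything to the equivalence (i)$\Leftrightarrow$(iv) of Lemma \ref{lemma.280} together with the identities $r(\eta\otimes\xi)=|[\eta,\xi]|$, $(\zeta\otimes\eta)^*(\zeta\otimes\xi)=\|\zeta\|^2\,\eta\otimes\xi$ and the equality case of Cauchy--Schwarz, invoking Theorem \ref{th.29} for the eigenvalue condition. In fact you are slightly more thorough, since the paper writes out only the chain for (i)$\Leftrightarrow$(ii) and declares the remaining equivalences ``similar.''
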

\begin{proof}
Suppose that $\zeta\in \mathscr{H}$. We may assume that $\zeta, \eta, \xi \neq 0$. Thus we have
\begin{align*}
\zeta\otimes\eta\parallel \zeta\otimes\xi & \Longleftrightarrow r((\zeta\otimes\eta)^\ast\zeta\otimes\xi) = \|(\zeta\otimes\eta)^\ast\zeta\otimes\xi\| = \|\zeta\otimes\eta\|\,\|\zeta\otimes\xi\|
\\&\hspace{4cm}(\mbox{by the equivalence}\,\, (i) \Leftrightarrow (iv) \,\mbox{of Lemma} \,\ref{lemma.280})
\\& \Longleftrightarrow r([\zeta, \zeta]\eta\otimes\xi) = \|[\zeta, \zeta]\eta\otimes\xi\| = \|\zeta\|\,\|\eta\|\,\|\zeta\|\,\|\xi\|
\\& \Longleftrightarrow \|\zeta\|^2r(\eta\otimes\xi) = \|\zeta\|^2\,\|\eta\|\,\|\xi\|
\\& \Longleftrightarrow |[\eta, \xi]| =\|\eta\|\,\|\xi\|
\\& \Longleftrightarrow \eta, \xi \,\mbox{are linearly dependent}
\\& \Longleftrightarrow \eta\parallel\xi.
\end{align*}
The proofs of the other equivalences are similar, so we omit them.
\end{proof}

\textbf{Acknowledgement.}  The second author (corresponding author) would like to thank the Department of Mathematics and Computer Science at Karlstad University in Sweden as well as the Senior Associate scheme of the Abdus Salam International Centre for Theoretical Physics (ICTP).

\bibliographystyle{amsplain}

\end{document}